\renewcommand*\env@matrix[1][c]{\hskip -\arraycolsep
  \let\@ifnextchar\new@ifnextchar
  \array{*\c@MaxMatrixCols #1}}
\newtheorem{theorem}{Theorem}[section] 
\newtheorem{lemma}[theorem]{Lemma}
\newtheorem{proposition}[theorem]{Proposition}
\newtheorem{corollary}[theorem]{Corollary}
\theoremstyle{definition}
\newtheorem{example}[theorem]{Example}
\theoremstyle{definition}
\newtheorem{definition}[theorem]{Definition}
\theoremstyle{definition}
\theoremstyle{plain}
\theoremstyle{remark}
\numberwithin{subcase}{case}
\def\m(h){{m(h)}} 
\def\PSG{{P(S;G)}}
\def\PSn{{P(S;n)}}
\newcommand{\lab}{\mathfrak{L}}
\numberwithin{figure}{section}
\numberwithin{theorem}{section}
\newcommand{\tcr}[1]{\textcolor{red}{#1}}
\newcommand{\ol}[1]{\overline{#1}}
\def\BState{\State\hskip-\ALG@thistlm}
\renewenvironment{abstract}
 {\small
 \begin{center}
 \bfseries \abstractname\vspace{-.5em}\vspace{0pt}
 \end{center}
 \list{}{
 \setlength{\leftmargin}{2cm}%
 \setlength{\rightmargin}{\leftmargin}%
 }%
 \item\relax}
 {\endlist}
\title{Counting peaks on graphs}
\author{Alexander Diaz-Lopez}
\address{Department of Mathematics \& Statistics\\
  Villanova University\\
  Villanova, PA 19085}
\email[A.~Diaz-Lopez]{alexander.diaz-lopez@villanova.edu}
\author{Lucas Everham}
\address{Department of Mathematics\\ 
Florida Gulf Coast University\\ 
Fort Myers, Florida 33965}
\email[L.Everham]{lfeverham4783@eagle.fgcu.edu}
\author{Pamela E. Harris}
\address{Department of Mathematics and Statistics\\ 
Williams College\\ 
Williamstown, Massachusetts 01267}
\email[P. E. Harris]{pamela.e.harris@williams.edu}
\author{Erik Insko}
\address{Department of Mathematics\\ 
Florida Gulf Coast University\\ 
Fort Myers, Florida 33965}
\email[E. Insko]{einsko@fgcu.edu}
\author{Vincent Marcantonio}
\address{Department of Mathematics\\ 
Florida Gulf Coast University\\ 
Fort Myers, Florida 33965}
\email[V. Marcantonio]{vrmarcantonio7740@eagle.fgcu.edu}
\author{Mohamed Omar}
\address{Department of Mathematics\\ 
Harvey Mudd College\\ 
Claremont, California 91711}
\email[M. Omar]{omar@g.hmc.edu}
\thanks{The second, fourth, and fifth authors were supported in part by a Seidler Student/Faculty Undergraduate Scholarly Collaboration Fellowship Program at Florida Gulf Coast University. The third named author was partially supported by NSF grant DMS1620202.}
\begin{document}

\maketitle
\begin{abstract}
Given a graph $G$ with $n$ vertices and a bijective labeling of the vertices using the integers $1,2,\ldots, n$, we say $G$ has a peak at vertex $v$ if the degree of $v$ is greater than or equal to 2, and if the label on $v$ is larger than the label of all its neighbors. Fix an enumeration of the vertices of $G$ as $v_1,v_2,\ldots, v_{n}$ and a fix a set $S\subset V(G)$. We want to determine the number of distinct bijective labelings of the vertices of $G$, such that the vertices in $S$ are precisely the peaks of $G$. The set $S$ is called the \emph{peak set of the graph} $G$, and the set of all labelings with peak set $S$ is denoted by $\PSG$. This definition generalizes the study of peak sets of permutations, as that work is the special case of $G$ being the path graph on $n$ vertices. In this paper, we present an algorithm for constructing all of the bijective labelings in $\PSG$ for any $S\subseteq V(G)$.
We also explore peak sets in certain families of graphs, including cycle graphs and joins of graphs. 
\end{abstract}


\section{Introduction} \label{sec:intro}

Let $[n]:=\{1,2,\ldots, n\}$  and $\mathfrak S_n$ denote the symmetric group on $n$ letters.
We let $\pi=\pi_1\pi_2\cdots \pi_n$ denote the one-line notation for a permutation $\pi\in\mathfrak S_n$ and we say that $\pi$ 
has a \emph{peak} at index $i$ if 
$\pi_{i-1}<\pi_i>\pi_{i+1}$.  The peak set of a permutation $\pi$ is defined as the set \[ P (\pi) = \{i \in [n]\, \vert 
\, \mbox{ $\pi$ has a peak at index $i$}\}. \] 
Given a subset $S \subseteq [n]$
we denote the set of all permutations with peak set $S$ by
\[\PSn = \{ \pi \in \mathfrak S_n \, \vert \,  P(\pi) = S\}.\]  
Peak sets of permutations have been the focus of much research; 
in particular, these sets are useful in studying peak algebras of symmetric groups
\cite{ABN04,ANO06,BH06,N03,P07,S78}, and more recently, enumerating the sets $P(S;n)$ for various $S$ has drawn considerable attention \cite{BBPS14,BBS13,BFT16,CV14,DHIO16,DHIP15,K14}.
In their celebrated paper, Billey, Burdzy, and Sagan \cite{BBS13} developed a recursive formula (whose terms alternate in sign) 
for $|P(S;n)|$ and showed that
\begin{align}
|\PSn| &= 2^{n-|S|-1} p_S(n)\label{eq:peakpoly}  
\end{align}
where $p_S(x)$ is a polynomial of degree 
$ \max(S)-1$ referred to as the \emph{peak polynomial} associated to the set $S$. 
\par

The results in \cite{BBS13} motivated the work of Billey, Burdzy, Pal, and Sagan 
on a probabilistic mass redistribution model on graphs that made thorough use of peak sets of permutations \cite{BBPS14}. Later, the research groups of Castro-Velez, Diaz-Lopez, Orellana, Pastrana, and Zevallos \cite{CV14} and Diaz-Lopez, Harris, Insko, and
Perez-Lavin \cite{ DHIP15} studied peak sets in Coxeter groups of classical types. 
More recently, Diaz-Lopez, Harris, Insko, and Omar developed a recursive formula for $|\PSn|$ that allowed the authors to resolve a conjecture of Billey, Burdzy and Sagan 
claiming that peak polynomials have nonnegative coefficients when expanded in a particular binomial basis \cite{DHIO16};
this newer recursive formula is based on an analysis of the possible positions in which the largest number in the permutation can appear.

A generalization of the results of peaks on permutations, and the focus of our study, is through
the following graph-theoretic lens.
Let $G$ be a graph with $n$ vertices $v_1, \ldots, v_{n}$.  A permutation $\pi=\pi_1\cdots \pi_n \in \mathfrak{S}_n$ corresponds to a 
bijective labeling $\ell_{\pi}:V(G) \to [n]$ by setting $\pi_i$ to be the label of vertex {$v_i$ i.e., $\ell_{\pi}(v_i)=\pi_i$}.  Through this correspondence, we interchangeably refer to a labeling and its corresponding permutation.  We say that a permutation $\pi$ has a peak at the vertex $v_i$ of $G$ if $\ell_{\pi}(v_i)>\ell_{\pi}(v_j)$ for all vertices $v_j$ adjacent to $v_i$, and remark that we do not allow peaks at vertices of degree 1 or 0 so that peak sets on paths agree with the existing literature with peak sets on permutations.

The \emph{$G$-peak set} of a permutation $\pi$ is defined to be the set \[ P_G(\pi) = \{i \in [n]\, \vert 
\, \mbox{ $\pi$ has a peak at the vertex $v_i$}\}. \]
Given $S \subseteq V(G):=\{v_1,\ldots,v_n\}$, we denote the set of all permutations with $G$-peak set $S$~by
\[\PSG = \{ \pi \in \mathfrak S_n \, \vert \,  P_G(\pi) = S\}\] and say $S$ is a $G$-admissible set if $\PSG$ is nonempty. 
 
\begin{example}
Below is a graph $G$ with vertices $v_1,v_2,v_3$ and $v_4$ and four different labelings of $G$. The first two labelings have peak set $S =\{v_1\}$, whereas the last
two have empty peak set.\\
\begin{center}
\begin{tikzpicture}[scale=.50]
    \tikzstyle{ghost node}=[draw=none]
    \tikzset{blue node/.style={circle,draw=blue, inner sep=2.5}}
    \tikzset{red node/.style={circle,draw=red, inner sep=2.5}}
    \tikzset{black node/.style={circle,draw=black, inner sep=2.5}}
    
	\node(1)[black node] at (2,0){$v_1$};
	\node(2)[black node] at (5,0){$v_2$};
	\node(3)[black node] at (3.5,2.5){$v_3$};
	\node(4)[black node] at (6.5,2.5){$v_4$};
    \node(5)[ghost node] at (3.5,-1){$G$};
	\draw (1) to  (2);
	\draw (1)  to (3);
	\draw (2) to (3);
	\draw (2)   to (4);
	\node(1)[black node] at (7,0){$\tcr{4}$};
	\node(2)[black node] at (10,0){$\tcr{3}$};
	\node(3)[black node] at (8.5,2.5){$\tcr{1}$};
	\node(4)[black node] at (11.5,2.5){$\tcr{2}$};
	\node(5)[ghost node] at (8.5,-1){$\tcr{4312}$};
	\draw (1)  to  (2);
	\draw (1)  to (3);
	\draw (2)  to (3);
	\draw (2)  to (4);
	\node(1)[black node] at (12,0){$\tcr{4}$};
	\node(2)[black node] at (15,0){$\tcr{2}$};
	\node(3)[black node] at (13.5,2.5){$\tcr{3}$};
	\node(4)[black node] at (16.5,2.5){$\tcr{1}$};
	\node(5)[ghost node] at (13.5,-1){$\tcr{4231}$};
	\draw (1)  to (2);
	\draw (1)  to (3);
	\draw (2)  to (3);
	\draw (2)  to (4);
	\node(1)[black node] at (17,0){$\tcr{1}$};
	\node(2)[black node] at (20,0){$\tcr{3}$};
	\node(3)[black node] at (18.5,2.5){$\tcr{2}$};
	\node(4)[black node] at (21.5,2.5){$\tcr{4}$};
	\node(5)[ghost node] at (18.5,-1){$\tcr{1234}$};
	\draw (1)  to (2);
	\draw (1)  to (3);
	\draw (2)  to (3);
	\draw (2)  to (4);
	\node(1)[black node] at (22,0){$\tcr{2}$};
	\node(2)[black node] at (25,0){$\tcr{3}$};
	\node(3)[black node] at (23.5,2.5){$\tcr{1}$};
	\node(4)[black node] at (26.5,2.5){$\tcr{4}$};
	\node(5)[ghost node] at (23.5,-1){$\tcr{2314}$};
	\draw (1)  to (2);
	\draw (1)  to (3);
	\draw (2)  to (3);
	\draw (2)  to (4);
\end{tikzpicture}
\end{center}
\end{example}

This new graph theoretic generalization recovers the results previously mentioned via the study of peaks on path and cycles graphs.
Motivated by this perspective, in the present work, we enumerate all labelings in a general graph with a fixed peak set by studying the possible vertices on which we can place the largest label. This process yields the following results:
\begin{enumerate}
\item Study peaks on $C_n$, cycle graph on $n$ vertices in Section \ref{sec:cycles} describing $|P(S;C_n)|$ as a sum of terms 
involving peak sets on path graphs in  Proposition~\ref{thm:cycles}.  
\item 
Generalize the methods in Section 2.1 and provide 
 Algorithm \ref{alg} for constructing 
all permutations in
$\PSG$ for an arbitrary graph $G$.

\item Consider graph joins and provide a collection of interesting special cases in Section \ref{sec:joins} that show that $|\PSG|$ often demonstrates factorial growth, and that the 
peak polynomials appearing in Equation \eqref{eq:peakpoly} are rare occurrences. 
\end{enumerate}

\section{Recursive Construction for Peaks on Graphs} \label{sec:recursion}

We begin our analysis of $P(S;G)$ by considering the case when $G=C_n$ is a cycle graph 
and we reduce this problem to studying peak sets on path graphs. Then in Subsection \ref{sec:construction}, we present Algorithm~\ref{alg}, which yields a construction of the set $\PSG$ for arbitrary graphs $G$. The reader may use the construction on cycle graphs as a concrete example or may skip to Subsection \ref{sec:construction} for Theorem~\ref{thm:alg_ya_orithm_ya} our main result. 



 \subsection{Cycles} \label{sec:cycles}

Let $C_n$ denote a cycle graph on $n$ vertices which we label by $v_1, v_2, \ldots, v_{n}$.
We say that a set $S =\{v_{i_1},v_{i_2},\ldots, v_{i_\ell}\}\subseteq V(C_n)$ is \textit{${C_n}$-admissible} if $P (S;C_n) \neq \emptyset$. For each $1\leq k\leq \ell$  let $\widehat{S}_{i_k}=S\setminus\{v_{i_k}\}$.
If $S$ is  $C_n$-admissible, then the label $n$ must be placed at a vertex $v_i$ in $S$. By removing the vertex $v_i$ and its incident edges from $C_n$, we obtain $C_n\setminus \{v_i\}$ a path graph on $n-1$ vertices whose peak set is $\widehat{S_i}$. 
We can now state our first set of results.

\begin{proposition}\label{thm:cycles}
If $S \subset V(C_n)$ is a $C_n$-admissible set, then  
 $ \left |P(S;C_{n})  \right | = \sum_{v_{i} \in S}  |P(\widehat{S_{i}}\,;\,C_n\setminus \{v_i\}) |.$
 \end{proposition}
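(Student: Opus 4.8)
The plan is to prove this by partitioning $P(S;C_n)$ according to which vertex receives the largest label $n$, and then exhibiting a bijection between each block of this partition and a set of labelings on the corresponding path graph. Since every vertex of $C_n$ has degree $2$, the vertex carrying the label $n$ is automatically a peak, and hence must lie in $S$; this is precisely the observation recorded just before the statement. Consequently the sets
\[
A_i := \{\pi \in P(S;C_n) : \ell_\pi(v_i) = n\}, \qquad v_i \in S,
\]
are pairwise disjoint and their union is all of $P(S;C_n)$, which gives $|P(S;C_n)| = \sum_{v_i \in S} |A_i|$. It then suffices to show that $|A_i| = |P(\widehat{S_i}\,;\,C_n \setminus \{v_i\})|$ for each $v_i \in S$.

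To establish this, I would define a map $\Phi_i$ sending a labeling $\pi \in A_i$ to its restriction to the vertices of $C_n \setminus \{v_i\}$. Because $\pi$ places $n$ at $v_i$, this restriction uses exactly the labels $\{1,\ldots,n-1\}$ and is therefore a bijective labeling of the path $C_n \setminus \{v_i\}$. The crux is to verify that its peak set equals $\widehat{S_i} = S \setminus \{v_i\}$. For every vertex $v_j$ with $j \notin \{i-1,i,i+1\}$ (indices taken modulo $n$), the neighbors of $v_j$ are identical in $C_n$ and in $C_n \setminus \{v_i\}$, so $v_j$ is a peak after restriction if and only if it was a peak in $C_n$. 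The two neighbors $v_{i-1}$ and $v_{i+1}$ of $v_i$ become the endpoints of the path, hence have degree $1$ and are not peaks by convention; but they were also not peaks in $C_n$, since each is adjacent to $v_i$, which carries the maximal label $n$. Thus no peak status changes away from $v_i$, and $\Phi_i(\pi) \in P(\widehat{S_i}\,;\,C_n \setminus \{v_i\})$.

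The inverse map takes a labeling $\sigma \in P(\widehat{S_i}\,;\,C_n \setminus \{v_i\})$ and extends it to $C_n$ by assigning the label $n$ to $v_i$. The same local analysis shows that $v_i$ becomes a peak (its label $n$ exceeds those of $v_{i-1}$ and $v_{i+1}$), the endpoints $v_{i-1}, v_{i+1}$ remain non-peaks (their new neighbor $v_i$ outranks them), and every other vertex retains its peak status, so the extension lies in $A_i$. These two maps are mutually inverse, yielding the claimed equality $|A_i| = |P(\widehat{S_i}\,;\,C_n \setminus \{v_i\})|$ and hence the proposition. I expect the main obstacle to be the bookkeeping around the two vertices $v_{i-1}$ and $v_{i+1}$ whose degree drops when $v_i$ is deleted; the key point rendering this harmless is that the maximality of the label $n$ at $v_i$ simultaneously forces $v_i$ to be a peak and forbids its neighbors from being peaks, so the deletion neither destroys nor creates peak information among the remaining vertices.
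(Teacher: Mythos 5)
Your proof is correct, but it takes a different route from the paper: the paper gives no standalone argument for this proposition at all, instead deducing it from the general Theorem~2.7 on the correctness of Algorithm~1. In that derivation one takes $L = V_{<2}(C_n) = \emptyset$ (every vertex of a cycle has degree $2$), so $P(S,C_n,L)=P(S;C_n)$, the loop in line~2 of the algorithm ranges only over $v \in S$, and the recursive call on $C_n\setminus\{v_i\}$ with updated peak set $\widehat{S_i}$ produces exactly the summands $|P(\widehat{S_i}\,;\,C_n\setminus\{v_i\})|$. Your direct bijection makes the same underlying mechanism explicit --- the maximal label $n$ must occupy a vertex of $S$, and deleting/reinserting that vertex neither creates nor destroys peaks elsewhere, with the only delicate point (the degree of $v_{i-1},v_{i+1}$ dropping to $1$) neutralized because their neighbor carries the label $n$ --- and your verification of these local peak-status claims is sound. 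What each approach buys: the paper's route gets the cycle case for free once the (considerably longer) inductive proof of Theorem~2.7 is done, and generalizes to arbitrary graphs, where the bookkeeping set $L$ is genuinely needed because deleting a vertex can \emph{create} new admissible peaks at its neighbors; your argument is self-contained, more elementary, and yields an explicit bijection $A_i \leftrightarrow P(\widehat{S_i}\,;\,C_n\setminus\{v_i\})$ rather than just an equality of cardinalities, but it exploits the special feature of cycles that the deleted vertex's neighbors become degree-$1$ endpoints and hence can never be peaks --- precisely the feature whose failure in general graphs forces the paper's $P(S,G,L)$ machinery. Two trivial remarks: you implicitly assume $n\geq 3$ (so $v_{i-1}\neq v_{i+1}$ and all degrees equal $2$), which is the standing convention for $C_n$, and your bijection in fact proves the identity for every $S$ without invoking admissibility, since both sides vanish together.
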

 
Proposition~\ref{thm:cycles} follows from Theorem~\ref{thm:alg_ya_orithm_ya} in Subsection~\ref{sec:construction} , and hence  we omit the details.

 \begin{corollary}\label{cor:cyclespoly}
 If $S$ is a $C_n$-admissible set, then
$\left |P(S;C_{n}) \right | = 2^{n-|S|-1} \sum_{v_i \in S} p_{\widehat{S_{i}}}(n-1)$
  where $p_{\widehat{S_i}}$ denotes the peak polynomial of Equation \eqref{eq:peakpoly}.
 \end{corollary}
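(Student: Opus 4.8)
The plan is to derive this directly from Proposition~\ref{thm:cycles} by substituting the Billey--Burdzy--Sagan peak polynomial formula \eqref{eq:peakpoly} into each summand. Proposition~\ref{thm:cycles} already expresses $|P(S;C_n)|$ as the sum $\sum_{v_i\in S}|P(\widehat{S_i}\,;\,C_n\setminus\{v_i\})|$, so the entire task reduces to rewriting a single term $|P(\widehat{S_i}\,;\,C_n\setminus\{v_i\})|$ in closed form and then recombining.

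First I would record that, as explained in the discussion preceding Proposition~\ref{thm:cycles}, deleting the vertex $v_i$ carrying the label $n$ turns $C_n$ into a path $C_n\setminus\{v_i\}$ on exactly $n-1$ vertices whose $G$-peak set is $\widehat{S_i}$. Under the correspondence between bijective labelings and permutations, a labeling of this path by $\{1,\dots,n-1\}$ is precisely a permutation in $\mathfrak S_{n-1}$, and peaks on a path agree with the classical notion of peaks on permutations. Hence $|P(\widehat{S_i}\,;\,C_n\setminus\{v_i\})| = |P(\widehat{S_i};n-1)|$, where on the right $\widehat{S_i}$ is read as a peak set of permutations in $\mathfrak S_{n-1}$ after reindexing the path positions.

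Next I would apply \eqref{eq:peakpoly} to each such term. Since $v_i\in S$, we have $|\widehat{S_i}| = |S|-1$, and \eqref{eq:peakpoly} gives
\begin{align*}
|P(\widehat{S_i};n-1)| &= 2^{(n-1)-|\widehat{S_i}|-1}\, p_{\widehat{S_i}}(n-1) = 2^{\,n-|S|-1}\, p_{\widehat{S_i}}(n-1).
\end{align*}
The crucial observation is that the exponent $n-|S|-1$ no longer depends on the choice of $v_i$, so the power of $2$ is a common factor across all summands. Substituting into Proposition~\ref{thm:cycles} and factoring it out yields $|P(S;C_n)| = 2^{\,n-|S|-1}\sum_{v_i\in S} p_{\widehat{S_i}}(n-1)$, which is the claim.

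The only point requiring care -- the main (mild) obstacle -- is the bookkeeping of indices when the cycle is cut into a path: the positions of $\widehat{S_i}$ must be relabeled so that $p_{\widehat{S_i}}$ denotes the peak polynomial of the correctly reindexed set on $n-1$ vertices, and one should confirm the formula behaves well in the boundary case $|S|=1$, where $\widehat{S_i}=\emptyset$ and $p_{\widehat{S_i}}$ is the constant polynomial $1$. Once this identification is fixed, every step is routine substitution, and admissibility of $S$ guarantees each $\widehat{S_i}$ is itself a valid peak set so that \eqref{eq:peakpoly} applies.
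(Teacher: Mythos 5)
Your proposal is correct and follows essentially the same route as the paper's own proof: apply Proposition~\ref{thm:cycles}, identify $C_n\setminus\{v_i\}$ with the path $P_{n-1}$, substitute Equation~\eqref{eq:peakpoly} using $|\widehat{S_i}|=|S|-1$, and factor out the common power $2^{n-|S|-1}$. Your extra remarks on reindexing the cut cycle and the $|S|=1$ boundary case are sound but not needed beyond what the paper does.
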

 
 \begin{proof}
 Using Proposition \ref{thm:cycles} and Equation \eqref{eq:peakpoly}, we get 
 \begin{align*} 
  \left |P(S;C_{n}) \right | & = \sum_{v_i \in S}  |P(\widehat{S_{i}}\,;\,C\setminus\{v_i\} ) |= \sum_{v_i \in S}  |P(\widehat{S_{i}}\,;\,P_{n-1} ) |\\ 
                                    &= \sum_{v_i \in S}  2^{(n-1)-|\widehat{S_{i}} |-1} p_{\widehat{S_{i}}}(n-1) 
                                     = 2^{n-|S|-1} \sum_{v_i \in S}  p_{\widehat{S_{i}}}(n-1).\qedhere
 \end{align*}
 \end{proof}

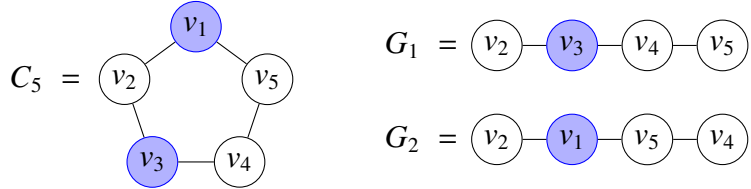
\begin{figure}[h!]
\begin{center}
\begin{tikzpicture}
	\tikzstyle{hollow node}=[draw=white]
    \tikzset{blue node/.style={circle,draw=blue, fill=blue!30, inner sep=2.5}}
    \tikzset{black node/.style={circle,draw=black, inner sep=2.5}}
     \node(c0) at (-2,.3){$C_5\;=$};
	\node(0)[blue node] at (0,1){$v_1$};
	\node(1)[black node] at (-.95,.30){$v_2$};
	\node(2)[blue node] at (-.58,-.80){$v_3$};
	\node(3)[black node] at (.58,-.80){$v_4$};
	\node(4)[black node] at (.95,.30){$v_5$};
	\draw (0)--(1);
	\draw (0)--(4);
	\draw (1)--(2);
	\draw (2)--(3);
	\draw (3)--(4);
    \node(a0) at (3,.75){$G_1\;=$};
	\node(a1)[black node] at (4,.75){$v_2$};
	\node(a2)[blue node] at (5,.75){$v_3$};
	\node(a3)[black node] at (6,.75){$v_4$};
	\node(a4)[black node] at (7,.75){$v_5$};
	\draw (a1)--(a2)--(a3)--(a4);
    \node(b0) at (3,-.5){$G_2\;=$};
	\node(b1)[black node] at (4,-.5){$v_2$};
	\node(b2)[blue node] at (5,-.5){$v_1$};
	\node(b3)[black node] at (6,-.5){$v_5$};
	\node(b4)[black node] at (7,-.5){$v_4$};
	\draw (b1)--(b2)--(b3)--(b4);	
\end{tikzpicture}
\end{center}
\caption{Cycle graph on 5 vertices and path graphs $G_1$ and $G_2$ obtained from removing vertices $v_3$ and $v_1$ from $C_5$, respectively.}\label{fig:cycle5}
\end{figure}
\begin{example}
Consider the graph $C_{5}$ in Figure \ref{fig:cycle5}. If $S = \{v_1,v_3\}\subseteq V(C_{5})$ 
 then the sets $\widehat{S_1}=\{v_3\}$ and $\widehat{S_3}=\{v_1\}$. One can verify that 
 \begin{align*}
 P(\widehat{S_{1}}=\{v_3\}\,;\,G_1)&=
 P(\widehat{S_{3}}=\{v_1\}\,;\,G_2)=
 \{ 1324,2314,1432,1423,2431,3421,3412\}
 \end{align*}
 where $G_1$ and $G_2$ are isomorphic to $P_4$ as shown in Figure \ref{fig:cycle5}.
Proposition~\ref{thm:cycles} yields $$ |P(S;C_{5})| =  | P(\widehat{S_{1}}\,;G_1)| +| P(\widehat{S_{3}}\,;G_2)|=16$$ and one can verify that in fact
\begin{align*}
P(S;C_{5})&=\left\{\begin{tabular}{cccccccc}
31542,& 32541,& 41523,& 41532,& 42513,& 42531,& 43512,& 43521,\\ 51324,& 51423,& 51432,& 52314,& 52413,& 52431,& 53412,& 53421
\end{tabular}
\right
\}.
\end{align*}
\end{example}


\subsection{General constructive algorithm for graphs} \label{sec:construction}
In this section we describe a recursive algorithm for constructing the set 
$\PSG$ consisting of all labelings of the vertices a graph $G$ with a given peak 
set $S$.   

We begin by setting the following notation.
 For any vertex $v \in V(G)$, the \emph{neighborhood} of $v$, denoted $N_G(v)$ 
 is the set $N_G(v):=\{ w \in V(G): \{v,w\} \text{ is an edge in $G$} \}.$  For any $S \subseteq V(G)$, we let $N_G(S)$ be the neighborhood set of $S$, namely $N_G(S)=\cup_{v\in S} N_G(v)$. As is standard, we say $S$ is  an \textit{independent set} if no vertex in $S$ is in the neighborhood of any other vertex in $S$, i.e. $S\cap N_G(S) =\emptyset$.

\begin{algorithm}
    \SetKwInOut{Input}{Input}
    \SetKwInOut{Output}{Output}

    \underline{function  GraphPeakSetAlgorithm}$(G,S,L,P)$\;
    \Input{Graph $G=(V,E)$ and Peak Set $S\subseteq V$,  $L$ a set of vertices which contains the leaves and isolated vertices of $G$, and a list $P$.}
    \For{$v \in S \cup ({L\setminus N_G(S)})$}{
    \If{$v \in S$}{ 
    $S \leftarrow S \backslash \{v\}$\;
    }
   { $L\leftarrow (L \cup N_G(v)) \cap (V(G)\backslash \{v\})$}\; $P[v] = |V|$\; 
    \If{$|V(G \backslash \{v\})|>0$}{
    \mbox{GraphPeakSetAlgorithm}$(G \backslash \{v\}, S, L, P)$\;
    }
    \If{$|V(G \backslash \{v\})|=0$}{
    \mbox{return} $P$
    }
    }    
    \caption{Graph Peak Set Algorithm}\label{alg}    
\end{algorithm}


Before verifying  Algorithm~\ref{alg} we present a graphical example that includes every possible choice in line 2 of the algorithm, as well as the algorithm's output.
\begin{example}
Consider the graph $G$ in Figure \ref{fig:algorithm}, and let $S=\{ v_1 \}$. We apply Algorithm~\ref{alg} to the graph $G$ with vertices $v_1,v_2,v_3,v_4$. At every iteration of the algorithm, we label a vertex with the largest available number and then remove that vertex from the graph, but for illustrative purposes we color the removed vertices instead of physically removing them from $G$. Figure~\ref{fig:algorithm} provides the inputs $S,L,P$ for the first iteration of Algorithm~\ref{alg}, as well as the final output $P$, which records the labeled graphs.
Writing each labeling of $G$ as a permutation, with the label of $v_i$ as the image of $i$, we obtain 
$\PSG = \{ 4321, 4312, 4231,  4132, 4123, 4213 , 3124, 3214 \} .$
\end{example}

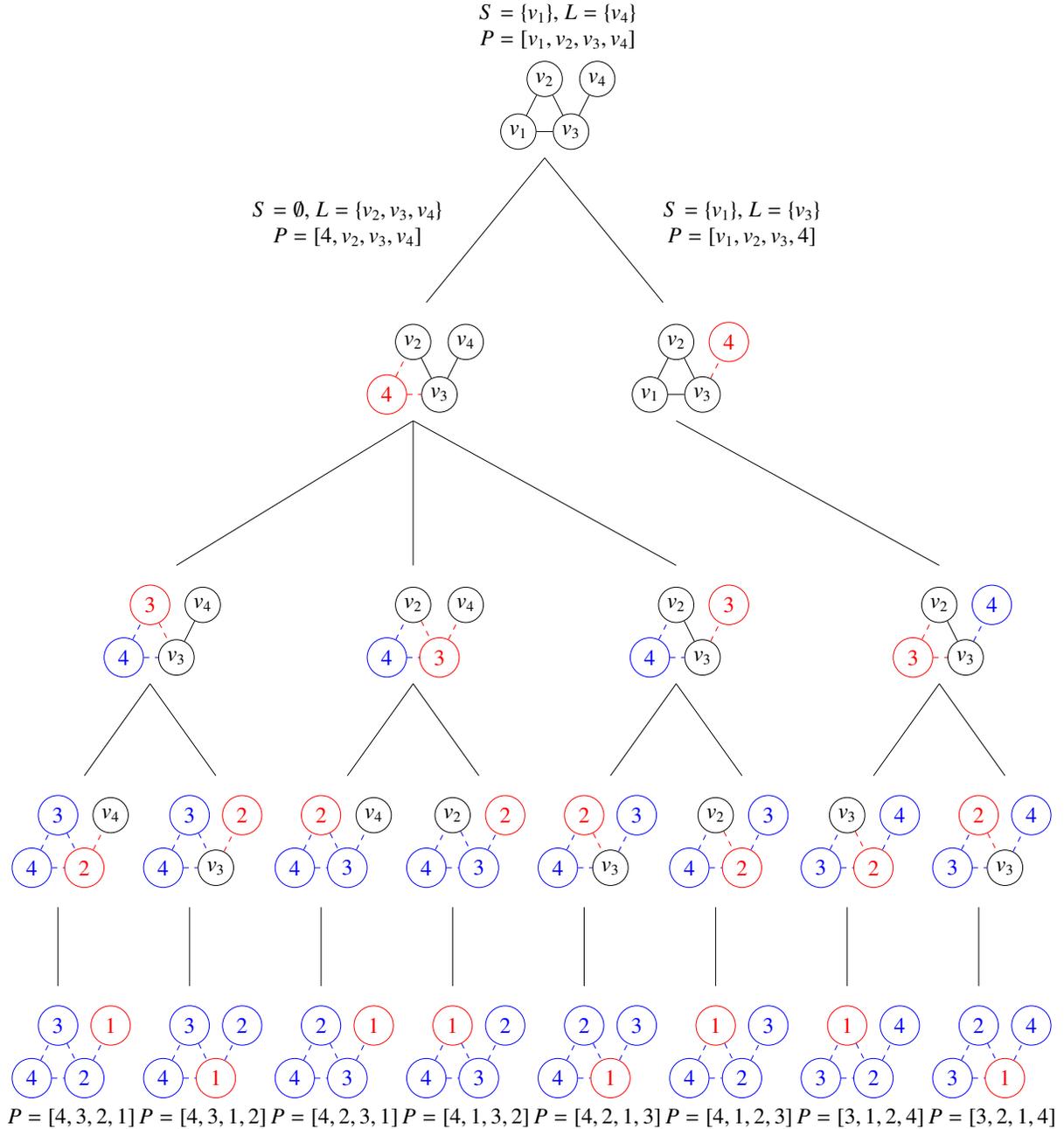
\begin{figure}[h]
\centering
\scalebox{.8}{
\begin{tikzpicture}
	\tikzstyle{hollow node}=[draw=white]
    \tikzset{black node/.style={circle,draw=black, inner sep=2.5}}
    \tikzset{red node/.style={circle,draw=red}}
 \tikzset{blue node/.style={circle,draw=blue}}

\node at (10,10.25) {$S=\{v_1\}$, $L=\{v_4\}$};
\node at (10,9.75) {$P=[v_1,v_2,v_3,v_4]$};

	\node(51a)[black node] at (9.25,8){$v_1$};
	\node(52a)[black node] at (9.75,9){$v_2$};
	\node(53a)[black node] at (10.25,8){$v_3$};
	\node(54a)[black node] at (10.75,9){$v_4$};
	\draw (51a)--(52a);
	\draw (51a)--(53a);
	\draw (52a)--(53a);
	\draw (53a)--(54a);
\draw(9.75,7.5)--(7.5,4.75);
\draw(9.75,7.5)--(12,4.75);

\node at (6,6.5) {$S=\emptyset$, $L=\{v_2, v_3, v_4\}$};
\node at (6,6) {$P=[4,v_2,v_3,v_4]$};
\node at (13.5,6.5) {$S=\{v_1\}$, $L=\{v_3\}$};
\node at (13.5,6) {$P=[v_1,v_2,v_3,4]$};

\node(51a)[red node] at (6.75,3){$\textcolor{red}{4}$};
	\node(52a)[black node] at (7.25,4){$v_2$};
	\node(53a)[black node] at (7.75,3){$v_3$};
	\node(54a)[black node] at (8.25,4){$v_4$};
	\draw[dashed, red] (51a)--(52a);
	\draw[dashed, red] (51a)--(53a);
	\draw (52a)--(53a);
	\draw (53a)--(54a);
	
\node(51a)[black node] at (11.75,3){$v_1$};
	\node(52a)[black node] at (12.25,4){$v_2$};
	\node(53a)[black node] at (12.75,3){$v_3$};
	\node(54a)[red node] at (13.25,4){$\textcolor{red}{4}$};
	\draw (51a)--(52a);
	\draw (51a)--(53a);
	\draw (52a)--(53a);
	\draw[dashed, red] (53a)--(54a);


\draw(7.25,2.5)--(2.75,-.25);
\draw(7.25,2.5)--(7.25,-.25);
\draw(7.25,2.5)--(12.25,-.25);	
\draw(12.25,2.5)--(17.25,-.25);

\node(51a)[blue node] at (1.75,-2){$\textcolor{blue}{4}$};
	\node(52a)[red node] at (2.25,-1){$\textcolor{red}{3}$};
	\node(53a)[black node] at (2.75,-2){$v_3$};
	\node(54a)[black node] at (3.25,-1){$v_4$};
	\draw[dashed, blue] (51a)--(52a);
	\draw[dashed, blue] (51a)--(53a);
	\draw[dashed, red] (52a)--(53a);
	\draw (53a)--(54a);

\node(51a)[blue node] at (6.75,-2){$\textcolor{blue}{4}$};
	\node(52a)[black node] at (7.25,-1){$v_2$};
	\node(53a)[red node] at (7.75,-2){$\textcolor{red}{3}$};
	\node(54a)[black node] at (8.25,-1){$v_4$};
	\draw[dashed, blue] (51a)--(52a);
	\draw[dashed, blue] (51a)--(53a);
	\draw[dashed, red] (52a)--(53a);
	\draw[dashed, red] (53a)--(54a);	

\node(51a)[blue node] at (11.75,-2){$\textcolor{blue}{4}$};
	\node(52a)[black node] at (12.25,-1){$v_2$};
	\node(53a)[black node] at (12.75,-2){$v_3$};
	\node(54a)[red node] at (13.25,-1){$\textcolor{red}{3}$};
	\draw[dashed, blue] (51a)--(52a);
	\draw[dashed, blue] (51a)--(53a);
	\draw (52a)--(53a);
	\draw[dashed, red] (53a)--(54a);	
	
\node(51a)[red node] at (16.75,-2){$\textcolor{red}{3}$};
	\node(52a)[black node] at (17.25,-1){$v_2$};
	\node(53a)[black node] at (17.75,-2){$v_3$};
	\node(54a)[blue node] at (18.25,-1){$\textcolor{blue}{4}$};
	\draw[dashed, red] (51a)--(52a);
	\draw[dashed, red] (51a)--(53a);
	\draw (52a)--(53a);
	\draw[dashed,blue] (53a)--(54a);

\draw(2.25,-2.5)--(1,-4.25);
\draw(2.25,-2.5)--(3.5,-4.25);
\draw(7.25,-2.5)--(6,-4.25);
\draw(7.25,-2.5)--(8.5,-4.25);
\draw(12.25,-2.5)--(11,-4.25);
\draw(12.25,-2.5)--(13.5,-4.25);
\draw(17.25,-2.5)--(16,-4.25);
\draw(17.25,-2.5)--(18.5,-4.25);

	\node(51a)[blue node] at (0,-6){$\textcolor{blue}{4}$};
	\node(52a)[blue node] at (.5,-5){$\textcolor{blue}{3}$};
	\node(53a)[red node] at (1,-6){$\textcolor{red}{2}$};
	\node(54a)[black node] at (1.5,-5){$v_4$};
	\draw[dashed, blue] (51a)--(52a);
	\draw[dashed,blue] (51a)--(53a);
	\draw[dashed,blue] (52a)--(53a);
	\draw[dashed,red] (53a)--(54a);
	
	\node(51b)[blue node] at (2.5,-6){$\textcolor{blue}{4}$};
	\node(52b)[blue node] at (3,-5){$\textcolor{blue}{3}$};
	\node(53b)[black node] at (3.5,-6){$v_3$};
	\node(54b)[red node] at (4,-5){$\textcolor{red}{2}$};
	\draw[dashed,blue] (51b)--(52b);
	\draw[dashed,blue] (51b)--(53b);
	\draw[dashed,blue] (52b)--(53b);
	\draw[dashed,red] (53b)--(54b);

	\node(51c)[blue node] at (5,-6){$\textcolor{blue}{4}$};
	\node(52c)[red node] at (5.5,-5){$\textcolor{red}{2}$};
	\node(53c)[blue node] at (6,-6){$\textcolor{blue}{3}$};
	\node(54c)[black node] at (6.5,-5){$v_4$};
	\draw[dashed,blue] (51c)--(52c);
	\draw[dashed,blue]  (51c)--(53c);
	\draw[dashed,blue]  (52c)--(53c);
	\draw[dashed,blue]  (53c)--(54c);

	\node(51d)[blue node] at (7.5,-6){$\textcolor{blue}{4}$};
	\node(52d)[black node] at (8,-5){$v_2$};
	\node(53d)[blue node] at (8.5,-6){$\textcolor{blue}{3}$};
	\node(54d)[red node] at (9,-5){$\textcolor{red}{2}$};
		\draw[dashed, blue]  (51d)--(52d);
		\draw[dashed, blue]  (51d)--(53d);
		\draw[dashed, blue]  (52d)--(53d);
		\draw[dashed, blue]  (53d)--(54d);
	
	\node(51e)[blue node] at (10,-6){$\textcolor{blue}{4}$};
	\node(52e)[red node] at (10.5,-5){$\textcolor{red}{2}$};
	\node(53e)[black node] at (11,-6){$v_3$};
	\node(54e)[blue node] at (11.5,-5){$\textcolor{blue}{3}$};
		\draw[dashed, blue]  (51e)--(52e);
		\draw[dashed, blue]  (51e)--(53e);
		\draw[dashed, red]  (52e)--(53e);
		\draw[dashed, blue]  (53e)--(54e);

	\node(51f)[blue node] at (12.5,-6){$\textcolor{blue}{4}$};
	\node(52f)[black node] at (13,-5){$v_2$};
	\node(53f)[red node] at (13.5,-6){$\textcolor{red}{2}$};
	\node(54f)[blue node] at (14,-5){$\textcolor{blue}{3}$};
		\draw[dashed, blue]  (51f)--(52f);
		\draw[dashed, blue]  (51f)--(53f);
		\draw[dashed, red]  (52f)--(53f);
		\draw[dashed, blue]  (53f)--(54f);

	\node(51g)[blue node] at (15,-6){$\textcolor{blue}{3}$};
	\node(52g)[black node] at (15.5,-5){$v_3$};
	\node(53g)[red node] at (16,-6){$\textcolor{red}{2}$};
	\node(54g)[blue node] at (16.5,-5){$\textcolor{blue}{4}$};
		\draw[dashed, blue]  (51g)--(52g);
		\draw[dashed, blue]  (51g)--(53g);
		\draw[dashed, red]  (52g)--(53g);
		\draw[dashed, blue]  (53g)--(54g);

	\node(51h)[blue node] at (17.5,-6){$\textcolor{blue}{3}$};
	\node(52h)[red node] at (18,-5){$\textcolor{red}{2}$};
	\node(53h)[black node] at (18.5,-6){$v_3$};
	\node(54h)[blue node] at (19,-5){$\textcolor{blue}{4}$};
		\draw[dashed, blue]  (51h)--(52h);
		\draw[dashed, blue]  (51h)--(53h);
		\draw[dashed, red]  (52h)--(53h);
		\draw[dashed, blue]  (53h)--(54h);

\draw(.5,-8.25)--(.5,-6.75);
\draw(3,-8.25)--(3,-6.75);
\draw(5.5,-8.25)--(5.5,-6.75);
\draw(8,-8.25)--(8,-6.75);
\draw(10.5,-8.25)--(10.5,-6.75);
\draw(13,-8.25)--(13,-6.75);
\draw(15.5,-8.25)--(15.5,-6.75);
\draw(18,-8.25)--(18,-6.75);

	\node(51a)[blue node] at (0,-10){$\textcolor{blue}{4}$};
	\node(52a)[blue node] at (.5,-9){$\textcolor{blue}{3}$};
	\node(53a)[blue node] at (1,-10){$\textcolor{blue}{2}$};
	\node(54a)[red node] at (1.5,-9){$\textcolor{red}{1}$};
		\draw[dashed, blue]  (51a)--(52a);
		\draw[dashed, blue]  (51a)--(53a);
		\draw[dashed, blue]  (52a)--(53a);
		\draw[dashed, blue]  (53a)--(54a);
	
	\node(51b)[blue node] at (2.5,-10){$\textcolor{blue}{4}$};
	\node(52b)[blue node] at (3,-9){$\textcolor{blue}{3}$};
	\node(53b)[red node] at (3.5,-10){$\textcolor{red}{1}$};
	\node(54b)[blue node] at (4,-9){$\textcolor{blue}{2}$};
		\draw[dashed, blue]  (51b)--(52b);
		\draw[dashed, blue]  (51b)--(53b);
		\draw[dashed, blue]  (52b)--(53b);
		\draw[dashed, blue]  (53b)--(54b);

	\node(51c)[blue node] at (5,-10){$\textcolor{blue}{4}$};
	\node(52c)[blue node] at (5.5,-9){$\textcolor{blue}{2}$};
	\node(53c)[blue node] at (6,-10){$\textcolor{blue}{3}$};
	\node(54c)[red node] at (6.5,-9){$\textcolor{red}{1}$};
		\draw[dashed, blue]  (51c)--(52c);
		\draw[dashed, blue]  (51c)--(53c);
		\draw[dashed, blue]  (52c)--(53c);
		\draw[dashed, blue]  (53c)--(54c);

	\node(51d)[blue node] at (7.5,-10){$\textcolor{blue}{4}$};
	\node(52d)[red node] at (8,-9){$\textcolor{red}{1}$};
	\node(53d)[blue node] at (8.5,-10){$\textcolor{blue}{3}$};
	\node(54d)[blue node] at (9,-9){$\textcolor{blue}{2}$};
		\draw[dashed, blue]  (51d)--(52d);
		\draw[dashed, blue]  (51d)--(53d);
		\draw[dashed, blue]  (52d)--(53d);
		\draw[dashed, blue]  (53d)--(54d);
	
	\node(51e)[blue node] at (10,-10){$\textcolor{blue}{4}$};
	\node(52e)[blue node] at (10.5,-9){$\textcolor{blue}{2}$};
	\node(53e)[red node] at (11,-10){$\textcolor{red}{1}$};
	\node(54e)[blue node] at (11.5,-9){$\textcolor{blue}{3}$};
		\draw[dashed, blue]  (51e)--(52e);
		\draw[dashed, blue]  (51e)--(53e);
		\draw[dashed, blue]  (52e)--(53e);
		\draw[dashed, blue]  (53e)--(54e);

	\node(51f)[blue node] at (12.5,-10){$\textcolor{blue}{4}$};
	\node(52f)[red node] at (13,-9){$\textcolor{red}{1}$};
	\node(53f)[blue node] at (13.5,-10){$\textcolor{blue}{2}$};
	\node(54f)[blue node] at (14,-9){$\textcolor{blue}{3}$};
		\draw[dashed, blue]  (51f)--(52f);
		\draw[dashed, blue]  (51f)--(53f);
		\draw[dashed, blue]  (52f)--(53f);
		\draw[dashed, blue]  (53f)--(54f);

	\node(51g)[blue node] at (15,-10){$\textcolor{blue}{3}$};
	\node(52g)[red node] at (15.5,-9){$\textcolor{red}{1}$};
	\node(53g)[blue node] at (16,-10){$\textcolor{blue}{2}$};
	\node(54g)[blue node] at (16.5,-9){$\textcolor{blue}{4}$};
		\draw[dashed, blue]  (51g)--(52g);
		\draw[dashed, blue]  (51g)--(53g);
		\draw[dashed, blue]  (52g)--(53g);
		\draw[dashed, blue]  (53g)--(54g);

	\node(51h)[blue node] at (17.5,-10){$\textcolor{blue}{3}$};
	\node(52h)[blue node] at (18,-9){$\textcolor{blue}{2}$};
	\node(53h)[red node] at (18.5,-10){$\textcolor{red}{1}$};
	\node(54h)[blue node] at (19,-9){$\textcolor{blue}{4}$};
	\draw[dashed, blue] (51h)--(52h);
	\draw[dashed, blue] (51h)--(53h);
	\draw[dashed, blue] (52h)--(53h);
	\draw[dashed, blue] (53h)--(54h);
\node at (.75,-10.75) {$P=[4,3,2,1]$};
\node at (3.25,-10.75) {$P=[4,3,1,2]$};
\node at (5.75,-10.75) {$P=[4,2,3,1]$};
\node at (8.25,-10.75) {$P=[4,1,3,2]$};
\node at (10.75,-10.75) {$P=[4,2,1,3]$};
\node at (13.25,-10.75) {$P=[4,1,2,3]$};
\node at (15.75,-10.75) {$P=[3,1,2,4]$};
\node at (18.25,-10.75) {$P=[3,2,1,4]$};

\end{tikzpicture}
}
\caption{Application of Algorithm~\ref{alg} on a small graph $G$.}\label{fig:algorithm}
\end{figure}

The next definition plays an important role in the proof of our main result Theorem~\ref{thm:alg_ya_orithm_ya}.
\begin{definition} \label{def:upsetting_vic_reiner}
Let $V_{<2}(G)$ denote the set of vertices in $G$ of degree less than 2, and let $L$ be a set with $V_{<2}(G) \subseteq L \subseteq V(G)$.
Define $P(S,G,L)$ to be the \textit{set of labelings of $G$ with peak set $S$ or peak set $S'$} with $S \subseteq S' \subseteq S \cup (L \setminus N_{G}(S))$.
\end{definition}

If $L = V_{<2}(G)$ then the only potentially admissible peak set $S'$ satisfying
$S \subseteq S' \subseteq S \cup (L \setminus N_{G}(S))$ is $S$ itself because none of the elements of $L$ can be peaks.  In this case $P(S,G,L) =P(S;G)$.

\begin{theorem} \label{thm:alg_ya_orithm_ya}
Let $L \subset V(G)$ be a set of vertices and suppose $V_{< 2}(G) \subseteq L$.  
Then the output  $GraphPeakSetAlgorithm(G,S,L,P)$ is $P(S,G,L)$.  
\end{theorem}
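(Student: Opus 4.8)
The plan is to induct on $n=|V(G)|$, following the recursion of Algorithm~\ref{alg}. Throughout I would carry the invariant that $S$ is independent and $S\cap V_{<2}(G)=\emptyset$; this holds at the top level whenever $S$ is admissible (the relevant case, since otherwise no labeling can realize every vertex of $S$ as a peak) and, as I check below, is preserved by each reduction. The base case $n=1$ is immediate: $G$ is a single vertex, $S=\emptyset$, $L=\{v\}$, and both $P(S,G,L)$ and the algorithm return the unique labeling. For the inductive step, fix a vertex $v$ and let $G'=G\setminus\{v\}$, $S'=S\setminus\{v\}$, and $L'=(L\cup N_G(v))\setminus\{v\}$ be the data passed to the recursive call. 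Writing $P_G(\ell)$ for the peak set of a labeling $\ell$, I note that a labeling $\ell$ of $G$ with $\ell(v)=n$ restricts to a labeling $\ell'$ of $G'$ on $\{1,\dots,n-1\}$, and this restriction is a bijection between labelings of $G$ placing $n$ at $v$ and labelings of $G'$.

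The first key step is a peak-set decomposition under this restriction. Since $n$ is the global maximum, $v$ is a peak of $\ell$ exactly when $\deg_G(v)\ge2$, every neighbor of $v$ is barred from being a peak of $\ell$, and every vertex outside $N_G(v)$ has identical neighborhoods in $G$ and $G'$. Writing $B=\{v\}$ when $\deg_G(v)\ge2$ and $B=\emptyset$ otherwise, this gives
\[
P_G(\ell)\;=\;B\ \sqcup\ \bigl(P_{G'}(\ell')\setminus N_G(v)\bigr),
\]
so in particular $P_G(\ell)\setminus\{v\}=P_{G'}(\ell')\setminus N_G(v)$. I would then prove the central claim that, for every $v\in S\cup(L\setminus N_G(S))$,
\[
\ell\in P(S,G,L)\iff \ell'\in P(S',G',L').
\]
This I would establish by transferring the two containments $S\subseteq P_G(\ell)$ and $P_G(\ell)\subseteq S\cup(L\setminus N_G(S))$ to the primed data, splitting into the cases $v\in S$ and $v\in L\setminus N_G(S)$ and using independence of $S$ (which gives $v\notin N_G(S')$ and hence $N_{G'}(S')=N_G(S')$).

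Next I would pin down which vertices may carry $n$. If $\ell\in P(S,G,L)$ and $\ell(v)=n$, then either $\deg_G(v)\ge2$, forcing $v$ to be a peak and so $v\in S\cup(L\setminus N_G(S))$; or $\deg_G(v)\le1$, whence $v\in V_{<2}(G)\subseteq L$, and $v\notin N_G(S)$ because a neighbor in $S$ would be a peak with label exceeding $n$. Either way $v$ lies in the loop set $S\cup(L\setminus N_G(S))$, and conversely each such $v$ admits extensions. I would also verify the invariant propagates: $V_{<2}(G')\subseteq L'$ (this is exactly why the update adjoins $N_G(v)$ to $L$, since deleting $v$ drops degrees only of its neighbors, all of which are placed in $L'$), while $S'$ stays independent with minimum degree $\ge2$ because the deleted vertex $v$ is nonadjacent to $S'$. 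By the inductive hypothesis the recursive call on $(G',S',L')$ returns $P(S',G',L')$; combining over all $v$ in the loop set, and noting that distinct $v$ produce labelings with $n$ in distinct positions (so nothing is output twice), the total output is $\bigsqcup_{v}\{\ell:\ell(v)=n,\ \ell'\in P(S',G',L')\}=P(S,G,L)$.

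The main obstacle is the bookkeeping in the central claim, concentrated on the vertices of $N_G(v)$. Deleting $v$ can \emph{unblock} some of its neighbors, creating peaks $X:=P_{G'}(\ell')\cap N_G(v)$ of $\ell'$ that are invisible in $P_G(\ell)$ (they are barred in $G$ by the neighbor $v$ carrying label $n$). To reconcile the two range conditions I must show each $p\in X$ lands in the \emph{optional} region $L'\setminus N_{G'}(S')$: membership in $L'$ is immediate since $p\in N_G(v)$ and $p\neq v$, whereas $p\notin N_{G'}(S')$ is precisely where independence of $S$ enters. Indeed, were $p$ adjacent to some $w\in S'$, then $w\in S\subseteq P_G(\ell)$ would be a peak of $\ell$ with $\ell(w)>\ell(p)$, yet $w\in N_{G'}(p)$ would contradict $p$ being a peak of $\ell'$. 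Dispatching this interaction, together with the symmetric check that deletion leaves the peak status of vertices outside $N_G(v)$ unchanged, is the heart of the argument; the remaining containments then reduce to routine manipulations of the sets $L$, $N_G(v)$, and $N_G(S)$.
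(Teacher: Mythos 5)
Your proposal is correct and takes essentially the same route as the paper: the paper proves Theorem~\ref{thm:alg_ya_orithm_ya} via two containment lemmas (Lemmas~\ref{lemma:containment1} and~\ref{lemma:containment2}), each by induction on $|V(G)|$ with the identical case split $v\in S$ versus $v\in L\setminus N_G(S)$ and the same transfer of peak-set containments when the vertex carrying the maximal label is deleted or reinserted, so your single-induction ``iff'' merely packages the paper's two containments into one equivalence. Your explicit invariant ($S$ independent and $S\cap V_{<2}(G)=\emptyset$) is a more carefully stated version of the admissibility assumption the paper's lemma proofs invoke at exactly the same points (e.g.\ to guarantee $S\setminus\{v\}\cap N_G(v)=\emptyset$ and that reinserting $v\in S$ creates a peak), so it is a welcome tightening rather than a departure.
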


The proof of Theorem~\ref{thm:alg_ya_orithm_ya} results from the following technical lemmas.

\begin{lemma} \label{lemma:containment1}
The set $P(S,G,L)$ is a subset of the output  $GraphPeakSetAlgorithm(G,S,L,P)$. \end{lemma}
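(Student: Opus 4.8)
The plan is to argue by induction on $n=|V(G)|$. Fix a labeling $\ell\in P(S,G,L)$ with peak set $S'$ satisfying $S\subseteq S'\subseteq S\cup(L\setminus N_G(S))$, and let $v=\ell^{-1}(n)$ be the vertex carrying the maximal label; the goal is to exhibit $\ell$ as an output of the algorithm. First I would show $v\in S\cup(L\setminus N_G(S))$, which guarantees that the for-loop in line 2 actually visits $v$. If $\deg_G(v)\ge 2$, then $v$ exceeds all of its neighbors and is therefore a peak, so $v\in S'\subseteq S\cup(L\setminus N_G(S))$. If instead $\deg_G(v)<2$, then $v\in V_{<2}(G)\subseteq L$; moreover $v\notin N_G(S)$, for a neighbor $s\in S$ would be a required peak dominated by the global maximum $\ell(v)=n$, contradicting $s\in S\subseteq S'$. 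Hence $v\in L\setminus N_G(S)$ in this case.

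Next I would pass to the recursive call. Write $G'=G\setminus\{v\}$, and let $\hat S=S\setminus\{v\}$ and $\hat L=(L\cup N_G(v))\setminus\{v\}$ be the values fed to the recursion, and let $\ell'=\ell|_{V(G')}$, a labeling of $G'$ by $1,\dots,n-1$ with peak set $\tilde S$. The crux is to verify $\ell'\in P(\hat S,G',\hat L)$, that is, $\hat S\subseteq\tilde S\subseteq \hat S\cup(\hat L\setminus N_{G'}(\hat S))$. Since the relative order of labels is unchanged and only vertices adjacent to $v$ lose a neighbor, peak status is preserved off $N_G(v)$, so $\tilde S=(S'\setminus\{v\})\cup B$ where $B=\{u\in N_G(v): u\text{ is a peak of }\ell'\text{ in }G'\}$. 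The lower bound $\hat S\subseteq\tilde S$ is immediate from $S\subseteq S'$. For the upper bound I would treat the two pieces separately: each $u\in S'\setminus\{v\}$ lies in $L\setminus\{v\}\subseteq\hat L$ and avoids $N_{G'}(\hat S)$ because $u\notin N_G(S)$; each $u\in B$ lies in $N_G(v)\subseteq\hat L$ and also avoids $N_{G'}(\hat S)$, since a required peak $s\in\hat S$ adjacent to $u$ would force both $\ell(s)>\ell(u)$ (as $s$ is a peak in $G$) and $\ell(u)>\ell(s)$ (as $u$ is a peak of $G'$ adjacent to $s$), which is impossible.

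Finally I would close the induction. One checks $V_{<2}(G')\subseteq\hat L$ so that the inductive statement applies to the recursive call: a vertex of degree $<2$ in $G'$ either already had degree $<2$ in $G$ (hence lay in $L$) or dropped in degree upon deletion of $v$ (hence lay in $N_G(v)$), and in either case it enters $\hat L$. By the inductive hypothesis, GraphPeakSetAlgorithm$(G',\hat S,\hat L,P)$ outputs $P(\hat S,G',\hat L)\ni\ell'$, and since the iteration for $v$ records $P[v]=n$, the algorithm reconstructs $\ell$; the base case $n=1$ is direct. I expect the main obstacle to be the upper bound in the second step, specifically controlling the set $B$ of newly created peaks: one must show that deleting the global maximum can only promote former neighbors of $v$ to peaks, and that such vertices never coincide with neighbors of the still-required peaks $\hat S$. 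This is precisely where the hypothesis that $\ell$ realizes $S$ as part of an honest peak set is used.
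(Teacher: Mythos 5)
Your proposal is correct and follows essentially the same route as the paper: induction on $|V(G)|$, locating the label $n$ at a vertex $v\in S\cup(L\setminus N_G(S))$, and showing the restricted labeling lies in $P\bigl(S\setminus\{v\},\,G\setminus\{v\},\,(L\cup N_G(v))\setminus\{v\}\bigr)$, with the same key observation that new peaks can only arise in $N_G(v)$ and can never neighbor the required peaks (you merge the paper's two cases $v\in S$ and $v\in L\setminus N_G(S)$ into one, and you additionally make explicit two details the paper leaves implicit: that the loop in line 2 actually visits $v$, and that $V_{<2}(G\setminus\{v\})$ is contained in the updated $L$). The only slip is cosmetic: a vertex $u\in S\setminus\{v\}$ need not lie in $L\setminus\{v\}$ as you assert, but such $u$ belongs to $S\setminus\{v\}$ itself and so satisfies the upper containment trivially.
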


\begin{proof}
Let $\lab$ be a labeling in $P(S,G,L)$.  We prove the result by induction on $|V(G)|$.
Firstly, if $V(G)=\{v\}$ then the only possible peak set is $S=\emptyset$. Since $L$ must contain any vertex of degree less than 2, the only possible $L$ is $\{v\}$. Then $P(S,G,L)$ consists of one labeling, hence $P(S,G,L)=\{[1]\}$.  Since $G$ has one vertex $v$, $GraphPeakSetAlgorithm(G,S,L,P)$ picks $v \in L\setminus\{N_G(S)\}$ in line 2, updates $L$ in lines 3-6, labels $v $ with $1$ in line $7$ and then returns the only labeling $P[v]=1$.   Thus $P(S,G,L) = GraphPeakSetAlgorithm(G,S,L,P)$ in the base case.

Now we suppose that the statement is true for all admissible peak sets $S \subseteq V(G') $ and any set $ L'$ with $ V_{<2}(G') \subseteq L' \subseteq V(G')$ on graphs $G'$ with $1 \leq  |V(G')| <n$.
Let $G$ be any graph with $|V(G)| =n$,  $S$ be an admissible peak set on $G$, and $L$ be a set with $ V_{<2}(G) \subseteq L \subseteq V(G)$.
Let $\lab$ be a labeling in $P(S,G,L)$. There are two cases to consider depending on where the label $n$ can appear in $\lab$: either $n$ labels a vertex in $S$ or $n$ labels a vertex in $L$ with $V_{<2}(G) \subseteq L\subseteq V(G)$. We will show that
$\lab$ is an output of $GraphPeakSetAlgorithm(G,S,L,P)$.\\ 

\noindent
\textbf{Case 1:} If $n$ labels a vertex $v$ in $S$, then when running $GraphPeakSetAlgorithm(G,S,L,P)$, in line 2 we consider $v\in S$ as the chosen vertex. Then, in lines 3-6 we update $S$ to be $S'=S\setminus\{v\}$ and $L$ to be $L'=(L \cup N_G(v)) \cap (V(G) \setminus \{v\})$. In line 7, we label $v$ by $P[v]=n=|V(G)|$ and then, in lines 8-10, we call
$GraphPeakSetAlgorithm(G \setminus \{v\}, S', L', P)$. It is now enough to show that the labeling $\lab'$, obtained from deleting $v$ in $\lab$, is an output of $GraphPeakSetAlgorithm(G\setminus\{v\}, S',L',P)$ in line 9. We do this by showing that $\lab'$ is in $P(S',G\setminus\{v\},L')$, which is a subset of $GraphPeakSetAlgorithm(G\setminus\{v\}, S',L',P)$ by our induction hypothesis. 

Note that by removing $v$ from $\lab$ we might create peaks at vertices in $N_G(v)$, and we keep all other peaks of $\lab$. Thus, the peak set $S''$ of $\lab'$ satisfies 
\begin{equation}\label{eq:psgsubset} S \setminus \{v\} \subseteq S'' \subseteq (S \setminus \{v\}) \cup (L \setminus N_{G }(S )) \cup N_{G} (v) .\end{equation}
Going back to the definition of $P(S',G\setminus\{v\},L')$, to show $\lab' \in P(S',G\setminus\{v\},L')$, we must show that the peak set $S''$ of $\lab'$ satisfies 
\begin{equation}\label{eq:psgsubset2}
S\setminus\{v\} \subseteq S'' \subseteq \bigg(S\setminus \{v\}\bigg) \cup \bigg[\bigg((L \cup N_G(v)) \cap (V(G) \setminus \{v\})\bigg) \setminus N_{G\setminus\{v\}}(S\setminus\{v\})\bigg].\end{equation}
Note that the only elements in the set to the right of the second containment in \eqref{eq:psgsubset} that are not in the set to the right of the second containment in \eqref{eq:psgsubset2} are the elements in $N_G(v) \cap N_{G\setminus\{v\}}(S\setminus \{v\})$. Since these elements are in $N_{G\setminus\{v\}}(S\setminus \{v\})$, they are connected to peaks, hence cannot be peaks themselves. Thus, they cannot be in $S''$, which proves the containment in \eqref{eq:psgsubset2}.\\

\noindent
\textbf{Case 2:} If $n$ labels a vertex $v$ in $L\setminus N_G(S)$, then when running $GraphPeakSetAlgorithm(G,S,L,P)$, in line 2 we consider $v \in L\setminus N_G(S)$ as the chosen vertex. Then, we skip lines 3-5 and update $L$ to be $L'=(L \cup N_G(v)) \cap (V(G) \setminus \{v\})$ in line 6. In line 7, we label $v$ by $P[v]=n=|V(G)|$ and then, in lines 8-10, we call $GraphPeakSetAlgorithm(G \setminus \{v\}, S, L', P)$. Similar to the previous case, it is now enough to show that the labeling $\lab'$, obtained from deleting $v$ in $\lab$, is an output of $GraphPeakSetAlgorithm(G\setminus\{v\}, S,L',P)$ in line 9. We do this by showing that $\lab'$ belongs to $P(S,G\setminus\{v\},L')$.

Note that by ignoring $v$ in $\lab$ we obtain a labeling $\lab'$ on $G \setminus \{v\}$ with peak set $S''$ where
\begin{equation}\label{eq:pgssubset3}S\subseteq S'' \subseteq S \cup (L\setminus N_G(S)) \cup N_{G}(v).\end{equation}
To prove $\lab' \in P(S,G\setminus\{v\},L')$, we must show
\begin{equation}\label{eq:pgssub4} 
S\subseteq S'' \subseteq S \cup \bigg[\bigg((L \cup N_G(v)) \cap (V(G) \setminus \{v\})\bigg) \setminus N_{G\setminus\{v\}}(S)\bigg]. \end{equation}
Note that the only elements in the set to the right of the second containment in \eqref{eq:pgssubset3} that are not in the set to the right of the second containment in \eqref{eq:pgssub4} are the elements in $N_G(v) \cap N_{G\setminus\{v\}}(S)$. Since these elements are in $N_{G\setminus\{v\}}(S)$, they are connected to peaks, hence cannot be peaks themselves. Thus, they cannot be in $S''$, which proves the containment in~\eqref{eq:pgssub4}.
\end{proof}

\begin{lemma} \label{lemma:containment2}
The set consisting of the output of $GraphPeakSetAlgorithm(G,S,L,P)$ is a subset of $P(S,G,L)$.  
\end{lemma}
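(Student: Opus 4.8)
The plan is to prove the reverse containment by induction on $|V(G)|$, running the argument of Lemma~\ref{lemma:containment1} in the opposite direction; throughout I assume $S$ is $G$-admissible (so $S$ is independent and every vertex of $S$ has degree at least $2$), which is the setting in which the sets in Definition~\ref{def:upsetting_vic_reiner} are meaningful. For the base case $|V(G)|=1$ the algorithm returns the single labeling $[1]$, whose peak set is $\emptyset = S$, and this lies in $P(S,G,L)$. For the inductive step I would take an arbitrary output $\lab$ of $GraphPeakSetAlgorithm(G,S,L,P)$. By the recursive structure of the algorithm, $\lab$ is produced by first selecting a vertex $v \in S \cup (L\setminus N_G(S))$, assigning $P[v]=n=|V(G)|$, and then completing $\lab$ from an output $\lab'$ of the recursive call $GraphPeakSetAlgorithm(G\setminus\{v\}, S', L', P)$, where $(S',L')$ are the data updated in lines 3--6. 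A short degree check shows $V_{<2}(G\setminus\{v\})\subseteq L'$ (removing $v$ can only lower degrees, and any newly low-degree vertex lies in $N_G(v)\subseteq L'$), so the inductive hypothesis applies to $\lab'$ and yields $\lab' \in P(S',G\setminus\{v\},L')$; that is, the peak set $S''$ of $\lab'$ satisfies $S'\subseteq S''\subseteq S'\cup(L'\setminus N_{G\setminus\{v\}}(S'))$.

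The structural observation I would isolate next is how the peak set of $\lab$ arises from $S''$ once $v$ is reinstated with the maximal label $n$. Since $\lab(v)=n$ exceeds every neighboring label, reinstating $v$ destroys exactly the peaks of $\lab'$ lying in $N_G(v)$, leaves every other peak of $\lab'$ unchanged, and creates a peak at $v$ precisely when $\deg_G(v)\ge 2$. Thus the peak set of $\lab$ equals $(S''\setminus N_G(v))$, together with $\{v\}$ exactly when $\deg_G(v)\ge 2$. With this formula in hand the remaining task is to push the two containments $S'\subseteq S''\subseteq S'\cup(L'\setminus N_{G\setminus\{v\}}(S'))$ through this transformation in the two cases $v\in S$ (where $S'=S\setminus\{v\}$) and $v\in L\setminus N_G(S)$ with $v\notin S$ (where $S'=S$), mirroring the bookkeeping of \eqref{eq:psgsubset}--\eqref{eq:pgssub4} from Lemma~\ref{lemma:containment1}.

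For the lower containment $S\subseteq (\text{peak set of }\lab)$ I would invoke admissibility: in the first case $v\in S$ has degree at least $2$, so it reappears as a peak, and each remaining $u\in S\setminus\{v\}$ lies in $S''$ and, by independence of $S$, is not adjacent to $v$, so it survives the deletion of $N_G(v)$; in the second case each $u\in S$ lies in $S''$ and cannot be adjacent to $v$ since $v\notin N_G(S)$. For the upper containment $(\text{peak set of }\lab)\subseteq S\cup(L\setminus N_G(S))$ the added vertex $v$ is handled directly ($v\in S$, resp.\ $v\in L\setminus N_G(S)$), while for any surviving peak $u\in S''\setminus N_G(v)$ the inductive upper bound places $u$ in $S'\cup(L'\setminus N_{G\setminus\{v\}}(S'))$; unwinding $L'=(L\cup N_G(v))\cap(V(G)\setminus\{v\})$ together with $u\notin N_G(v)$ forces $u\in L$, and combining $u\notin N_{G\setminus\{v\}}(S')$ with $u\notin N_G(v)$ upgrades this to $u\notin N_G(S)$.

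I expect the main obstacle to be precisely this final step: correctly reconciling neighborhoods taken in $G$ versus $G\setminus\{v\}$, and verifying that the only discrepancy, $N_G(v)\cap N_{G\setminus\{v\}}(S')$, is harmless because those vertices are already excluded by the $S''\setminus N_G(v)$ restriction. This is the same subtlety flagged after \eqref{eq:pgssub4}, now used in reverse to turn ``not a neighbor of $S'$ in the smaller graph'' into ``not a neighbor of a peak of $S$ in $G$.'' Once both containments are verified in each case, the peak set of $\lab$ lies between $S$ and $S\cup(L\setminus N_G(S))$, so $\lab\in P(S,G,L)$, completing the induction.
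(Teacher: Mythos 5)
Your proposal is correct and follows essentially the same route as the paper's proof: induction on $|V(G)|$, the same base case, the same two cases according to whether the chosen vertex $v$ lies in $S$ or in $L\setminus N_G(S)$, and the same unwinding of $L'=(L\cup N_G(v))\cap(V(G)\setminus\{v\})$ to show the reinserted maximal label only destroys peaks in $N_G(v)$ and creates one at $v$ exactly when $\deg_G(v)\geq 2$. If anything, you make explicit two details the paper glosses over --- the check that $V_{<2}(G\setminus\{v\})\subseteq L'$ so the induction hypothesis applies, and the use of independence of the admissible set $S$ in the lower containment --- but the argument is the same.
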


\begin{proof}
We will show that any output of $GraphPeakSetAlgorithm(G,S,L,P)$, $\lab$, is also an element of $P(G,S,L)$ by induction on $|V(G)|$.
Let $|V(G)|=1$, then the only possible peak set is $S=\emptyset$. Since $L$ must contain any vertex of degree less than 2, the only possible $L$ is $\{v\}$. Hence $P(S,G,L)$ consists of one labeling $P(S,G,L)=\{[1]\}$.  Since $G$ has one vertex $v$, $GraphPeakSetAlgorithm(G,S,L,P)$ picks $v \in L\setminus\{N_G(S)\}$ in line 2, updates $L$ in lines 3-6, labels $v $ with $1$ in line $7$ and then returns the only labeling $P[v]=1$.   Thus $P(S,G,L) = GraphPeakSetAlgorithm(G,S,L,P)$ in the base case.

Let $G$ be a graph with $|V(G)|=n$, $S$ be an admissible peak set, and $L$ be a set satisfying $ V_{<2}(G) \subseteq L \subseteq V(G)$. Let $\lab$ be an output of $GraphPeakSetAlgorithm(G, S, L, P)$. Note that in step 2 of $GraphPeakSetAlgorithm(G,S,L,P)$, a certain vertex of $v \in S \cup (L\setminus\{N_G(S)\})$ is chosen. In steps 3-6, some elements are added to the sets $S$ and $L$. Let $S', L'$ denote the resulting sets. In step 7, the vertex $v$ is labeled by $n$, i.e., $\lab[v]=n$. In steps 8-10, we run $GraphPeakSetAlgorithm(G\backslash \{v\}, S', L', P)$. Let $\lab'$ be the labeling we obtain by removing $v$ from $\lab$. Then by the induction hypothesis, $\lab'$ is an element of $P(S',G\backslash \{v\},L')$, i.e., $\lab'$ has peak set $\overline{S'}$ with $S' \subseteq \overline{S'}\subseteq S' \cup (L' \setminus N_G(S'))$. To show $\lab$ is an element of $P(G,S,L)$, we must show that when inserting $v$ into $\lab'$ with label $n$, we get a labeling with peak set $\overline{S}$ where $S \subseteq \overline{S}\subseteq S \cup (L \setminus N_G(S))$. There are two cases to consider. \\

\noindent
\textbf{Case 1:} If the chosen vertex $v$ is in $S$, then $S'=S\backslash\{v\}$ and $L'=(L\cup N_G(v))\cap (V(G)\backslash \{v\})$. Thus, $\lab'$ has peak set $\overline{S'}$ with 
\begin{equation}\label{eq:peakoflab}
S\backslash\{v\} \subseteq \overline{S'}\subseteq (S\backslash\{v\}) \cup \bigg[\bigg((L\cup N_G(v))\cap (V(G)\backslash \{v\})\bigg) \setminus N_G(S\backslash\{v\})\bigg].\end{equation}
By inserting the previously removed label $n$, indexed by $v$, back into $\lab'$, we now create a peak at $v$, and remove any previous peak in vertices that are neighbors of $v$. Using the containment relations in \eqref{eq:peakoflab}, we get that the peak set $\overline{S}$ of $\lab$ satisfies
\[ S\subseteq \overline{S} \subseteq S\cup \bigg( L\setminus N_G(S\setminus \{v\})\bigg) .\]
Since $v$ is a peak, we are certain no vertex in $N_G(v)$ is also a peak, thus we can further say $S\subseteq \overline{S} \subseteq S\cup ( L\setminus N_G(S))$, which completes the first case.\\

\noindent
\textbf{Case 2:} We now consider when the chosen vertex $v$ is in $L\setminus N_G(S)$. Then $S'=S$ and $L'=(L\cup N_G(v))\cap (V(G)\backslash \{v\})$. Thus, $\lab'$ has peak set $\overline{S'}$ with 
\begin{equation}\label{eq:peakoflab2}
S \subseteq \overline{S'}\subseteq S \cup \bigg[\bigg((L\cup N_G(v))\cap (G\backslash \{v\})\bigg) \setminus N_G(S)\bigg].\end{equation}
By inserting the previously removed label $n$, indexed by $v$, back into $\lab'$, we either create a peak at $v$ (if $deg_G(v)\geq 2$) or do not create a peak  at $v$ (if $deg_G(v)<2$). In either case, we also remove any previous peaks in vertices that are neighbors of $v$. Using the containment relations in \eqref{eq:peakoflab2}, we get that the peak set $\overline{S}$ of $\lab$ satisfies 
\[S\subseteq \overline{S} \subseteq S\cup(L\backslash N_G(S)).\]
We have shown that regardless of the choice of $v$ in line 2, the output $\lab$ has peak set $\overline{S}$ where 
$S\subseteq \overline{S} \subseteq S\cup(L\backslash N_G(S))$; in other words $\lab$ is in $P(S,G,L)$.
\end{proof}


\section{Graph Joins}\label{sec:joins}
In this section, we study the relationship 
between  the peak set of the join of two graphs in terms of the peak sets of the constituent graphs.  
We prove three main results related to the peak sets of graph joins: Proposition \ref{prop:oneside} on the joins of two arbitrary graphs, Proposition \ref{theorem:nullgraphjoin} on the join of an arbitrary graph with a complete graph, and Proposition \ref{prop:complete} on the join of an arbitrary graph with a null graph.

We recall that the \emph{join} of $G_1$ and $G_2$, 
denoted $G_1 \vee G_2$, 
has vertex set $V(G_1) \cup V(G_2)$ and edge set $E(G_1) \cup E(G_2) \cup \{\{v_1,v_2\} \ | \ v_1 \in V(G_1), \ v_2 \in V(G_2)\}$. 
For example, let $\overline{K_n}$ be the null graph given by an independent set on $n$ vertices,
and $K_n$ be the complete graph on $n$ vertices. A star graph on $n$ vertices is 
$K_1 \vee \overline{K_{n-1}}$ and a complete bipartite graph is $K_{m,n} = \overline{K_m} \vee \overline{K_n}$ 
(see Figure \ref{fig:examples of joins}).

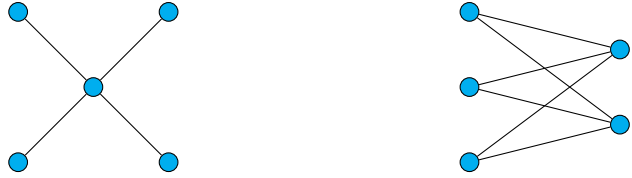
\begin{figure}[h!]
\centering
\begin{tikzpicture}[scale=1]
   \tikzstyle{hollow node}=[draw=white]
   \tikzset{blue node/.style={circle,fill=cyan,draw=black, inner sep=2.5}
}
   \node(0)[blue node] at (-7,3){};
   \node(1)[blue node] at (-8,4){};
   \node(2)[blue node] at (-8,2){};
   \node(3)[blue node] at (-6,2){};
   \node(4)[blue node] at (-6,4){};
   
   \draw(0)--(1);
   \draw(0)--(2);
   \draw(0)--(3);
   \draw(0)--(4);

   \node(5)[blue node] at (-2,4){};
   \node(6)[blue node] at (-2,3){};
   \node(7)[blue node] at (-2,2){};
   \node(8)[blue node] at (0,3.5){};
   \node(9)[blue node] at (0,2.5){};   
      
   \draw(5)--(8);
   \draw(5)--(9);
   \draw(6)--(8);
   \draw(6)--(9);
   \draw(7)--(8);
   \draw(7)--(9);
\end{tikzpicture}
\caption{The graph on the left is $K_1 \vee \overline{K_4}$, a star graph on $5$ vertices. The graph on the right is $K_{3,2}=\overline{K_3} \vee \overline{K_2}$.}\label{fig:examples of joins}
\end{figure}

We first show that peak sets in $G_1 \vee G_2$ are completely contained in either 
$V(G_1)$ or $V(G_2)$.  

\begin{proposition}\label{prop:oneside}
Let $G_1$ and $G_2$ be arbitrary graphs with vertex sets $V_1$ and $V_2$ respectively. Suppose that
$|V_1|>1$ and $|V_2|>1$.  Let $S$ be a nonempty ($G_1 \vee G_2)$-admissible peak set. Then $S \subseteq V_1$ or $S \subseteq V_2$.  
\end{proposition}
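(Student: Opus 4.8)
The plan is to argue by contradiction, exploiting the single structural feature that distinguishes a join: in $G_1 \vee G_2$ every vertex of $V_1$ is adjacent to every vertex of $V_2$. Since $S$ is nonempty and $(G_1 \vee G_2)$-admissible, I would first fix a labeling $\pi$ realizing it, that is, a permutation $\pi$ with $P_{G_1 \vee G_2}(\pi) = S$. By the definition of a $G$-peak set, every vertex of $S$ is then a peak of $\pi$, meaning its label exceeds the labels of all of its neighbors in $G_1 \vee G_2$.

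Next I would suppose, toward a contradiction, that $S \not\subseteq V_1$ and $S \not\subseteq V_2$. Then there exist vertices $a \in S \cap V_1$ and $b \in S \cap V_2$. Because $a \in V_1$ and $b \in V_2$, the edge $\{a,b\}$ belongs to $E(G_1 \vee G_2)$ by the very definition of the join, so $a$ and $b$ are neighbors. Now the peak conditions collide directly: since $a$ is a peak and $b$ is one of its neighbors, $\ell_\pi(a) > \ell_\pi(b)$; and since $b$ is a peak and $a$ is one of \emph{its} neighbors, $\ell_\pi(b) > \ell_\pi(a)$. These two inequalities are incompatible, which is the desired contradiction. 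Hence no such pair $a,b$ exists, and therefore $S \subseteq V_1$ or $S \subseteq V_2$.

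It is worth noting where the hypotheses $|V_1|>1$ and $|V_2|>1$ are actually used. They enter only to ensure that a vertex on either side is eligible to be a peak at all: each vertex of $V_i$ is adjacent to every vertex of $V_{3-i}$, so it has degree at least $|V_{3-i}| \geq 2$ in the join, and the degree-$\geq 2$ requirement in the definition of a peak is satisfied. Thus these hypotheses guarantee the setup is nondegenerate, but they play no role in the contradiction itself, which rests solely on the mutual adjacency of $a$ and $b$.

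I do not expect a genuine obstacle in this argument; the claim reduces to the impossibility of two mutually adjacent vertices each carrying the strictly larger label. The only points requiring care are to invoke the join's adjacency definition explicitly for the cross edge $\{a,b\}$, and to record precisely how the cardinality hypotheses secure the degree condition so that admissibility is not vacuously obstructed.
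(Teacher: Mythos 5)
Your proof is correct, but it takes a genuinely different (if closely related) route from the paper's. The paper looks at the vertex $v$ carrying the maximal label $N=|V_1\cup V_2|$: if $v\in V_1$, then $v$ is adjacent to every vertex of $V_2$, so no vertex of $V_2$ can be a peak and $S\subseteq V_1$, with the case $v\in V_2$ symmetric --- an argument in keeping with the paper's recurring technique of tracking where the largest label sits. You instead prove, in effect, the general lemma that a peak set in any graph is an independent set (two adjacent vertices cannot each carry the strictly larger label), and then observe that in a join no set meeting both $V_1$ and $V_2$ can be independent, since every cross pair $\{a,b\}$ with $a\in V_1$, $b\in V_2$ is an edge. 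Both arguments are one step long and rest on the same structural feature of the join, but yours isolates a reusable fact (independence of peak sets), while the paper's version additionally pins down which side $S$ lies on: the side containing the label $N$. Your closing remark is also accurate and worth keeping: neither argument uses $|V_1|>1$ or $|V_2|>1$ to derive the dichotomy itself; those hypotheses only ensure every vertex of the join has degree at least $2$, so that nonempty admissible peak sets are possible in the first place.
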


\begin{proof}
In any labeling of $G_1 \vee G_2$, there is some vertex $v$ labeled by the number $N=|V_1 \cup V_2|$. 
Assume the vertex $v$ is in $V_1$. 
Then no vertex of $V_2$ can be a peak because $v$ is adjacent to every vertex in $V_2$, so $S \subseteq V_1$. Similarly, if $v\in V_2$, then $S \subseteq V_2$.
\end{proof}

The next two results give explicit formulas for the number of labelings with a given peak set $S$ on 
joins of an arbitary graph $G$ with either the null graph $\ol{K_n}$ or the complete graph $K_n$.
 
\begin{proposition} \label{theorem:nullgraphjoin}
 Let $S\subseteq V(\overline{K_n})$ be nonempty, $G$ be any graph with $|V(G)|>1$, and $G' = \overline{K_n} \vee G$.   
 Then the set $S$ is admissible in $G'$ with
 \[ |P(S;G')|  = |S|! \cdot |V(G)| \cdot (|V(G')|-|S|-1)! \]

\end{proposition}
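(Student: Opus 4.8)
The plan is to give a direct bijective enumeration rather than invoke Algorithm~\ref{alg}, by pinning down exactly which labelings of $G'$ have peak set precisely $S$. Write $m = |V(G)|$, $N = |V(G')| = n+m$, and $s = |S|$. The crucial structural observation is that in $G' = \overline{K_n}\vee G$ each vertex $v\in V(\overline{K_n})$ has neighborhood exactly $V(G)$, so $\deg_{G'}(v) = m \ge 2$ (this is where the hypothesis $|V(G)|>1$ is used, so that such vertices are eligible to be peaks), and consequently $v$ is a peak if and only if its label exceeds every label on $V(G)$. Dually, every $u\in V(G)$ is adjacent to all of $V(\overline{K_n})$, in particular to every vertex of the nonempty set $S$.

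First I would establish the forced label positions. Let $M$ denote the largest label appearing on $V(G)$. For each $v\in S$ to be a peak its label must exceed $M$, while for each $w\in V(\overline{K_n})\setminus S$ to fail to be a peak its label must be smaller than $M$. Hence every label exceeding $M$ must sit on $S$ (it cannot lie on $V(G)$, whose maximum is $M$, nor on $V(\overline{K_n})\setminus S$), and every vertex of $S$ carries such a label. This forces the labels on $S$ to be exactly the top $s$ values $\{N-s+1,\dots,N\}$, forces $M = N-s$, and forces the label $N-s$ to lie on some vertex of $V(G)$.

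Next I would check the converse, that any labeling with this profile has peak set exactly $S$. Each $v\in S$ carries a label in $\{N-s+1,\dots,N\}$, larger than every label on its neighborhood $V(G)$, so $v$ is a peak. Because $S$ is nonempty, every $u\in V(G)$ is adjacent to a vertex of $S$ whose label exceeds $N-s \ge \mathrm{label}(u)$, so no vertex of $V(G)$ is a peak. Finally each $w\in V(\overline{K_n})\setminus S$ has label at most $N-s-1$, which is less than the label $N-s$ of its $V(G)$-neighbor, so $w$ is not a peak. Thus the valid labelings are precisely those obtained by arranging the top $s$ labels on $S$ in $s!$ ways, choosing which of the $m$ vertices of $V(G)$ receives label $N-s$, and distributing the remaining $N-s-1$ labels arbitrarily among the remaining $N-s-1$ vertices in $(N-s-1)!$ ways. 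Multiplying gives $|S|!\cdot |V(G)| \cdot (|V(G')|-|S|-1)!$; positivity of this product (note $N-s-1 \ge m-1 \ge 1$) yields admissibility.

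The hard part will be the forced-label argument of the second paragraph: one must simultaneously extract from the peak and non-peak requirements both that $S$ receives precisely the top $|S|$ labels and that the next label $N-|S|$ is confined to $V(G)$, and then recognize that no further constraint is imposed on $V(G)$ itself, because adjacency to the nonempty set $S$ already precludes peaks there. Once this characterization is in hand, the enumeration is a routine product.
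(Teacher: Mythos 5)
Your proof is correct and takes essentially the same approach as the paper's: both arguments force the top $|S|$ labels onto $S$, force the label $N-|S|$ onto a vertex of $V(G)$ (in one of $|V(G)|$ ways), and then distribute the remaining $N-|S|-1$ labels freely, giving $|S|!\cdot|V(G)|\cdot(N-|S|-1)!$. Your write-up is if anything slightly more complete, since you explicitly verify the converse (that every labeling with this forced profile has peak set exactly $S$) and flag where $|V(G)|>1$ is used, both of which the paper leaves implicit.
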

\begin{proof}
Let $m=|V(G')|$. First we claim that the vertices in $S$ must be labeled by the set 
\[\mathcal{I}= \{m,m-1,\ldots,m-n-1\}.\] Otherwise there are two possible cases: (1) Some vertex in $V(G)$ will be labeled by an element in $\mathcal{I}$ while some element of $S$ will not.  This contradicts that $S$ is the peak set.  (2) A vertex in $\overline{K}_n$ (not in $S$) will be labeled by an element in $\mathcal{I}$. In this case, that vertex would be a peak contradicting that $S$ is the peak set. 
We conclude that the vertices in $S$ must be labeled by the elements of  $\mathcal{I}$.

There are $|S|!$ ways to assign these labels to vertices in $S$, and in any of these labelings the label $m-|S|$ must be assigned in $V(G)$, otherwise there will be an additional peak in $V(\ol{K_n})$. There are $|V(G)|$ such labelings, each of which guarantees that none of the vertices in $V(\ol{K_n})\backslash S$ is a peak. None of the remaining vertices are peaks, so we are free to assign them the labels $1,2, \ldots, m-|S|-2,  m-|S|-1$ in any order. This completes the proof. 
\end{proof}

A similar result is acquired when replacing $\ol{K_n}$ by $K_n$ in Proposition \ref{theorem:nullgraphjoin} and, as the proof is analogous, we omit the details.
 
\begin{proposition} \label{prop:complete}
Let $G$ be an arbitrary graph and let $G' = K_n \vee G$. If $S\subseteq V(K_n)$, then 
\begin{align*}
   |P(S;G')| = \begin{cases}
    \left(|V(G')|-1\right)! & \text{if } |S|=1\\
    0 &\text{otherwise.}
    \end{cases}
\end{align*}
\end{proposition}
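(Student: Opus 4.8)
The plan is to split on $|S|$ and to exploit the defining feature of the join: in $G' = K_n \vee G$ each vertex of $K_n$ is adjacent to every other vertex of $K_n$ and to every vertex of $G$. The key observation I would record first is that a vertex $v \in V(K_n)$ is a peak of a labeling of $G'$ exactly when its label exceeds that of all its neighbors, and since its neighbors together with $v$ exhaust $V(G')$, this happens if and only if $v$ carries the global maximum label $m := |V(G')|$ (here I assume $\deg_{G'}(v) = (n-1) + |V(G)| \geq 2$, so that a peak at $v$ is permitted).

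For $|S| \geq 2$, I would argue that $P(S;G')$ is empty: if $S$ contained two distinct vertices $u, v \in V(K_n)$, each would have to hold the maximum label $m$ by the observation, which no bijective labeling allows; equivalently, $u$ and $v$ are adjacent in $K_n$ and so cannot both be peaks. This yields the second branch $|P(S;G')| = 0$.

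For $S = \{v\}$, the forward direction is immediate from the observation: any labeling with $v$ a peak assigns $m$ to $v$. For the converse I would show that \emph{every} labeling placing $m$ on $v$ has $G$-peak set exactly $\{v\}$. Indeed $v$ is then a peak, and any other vertex $w$ is adjacent to $v$ (either $w \in V(K_n)$ with $w$ adjacent to $v$ inside $K_n$, or $w \in V(G)$ adjacent to $v$ through the join), so $w$ has the neighbor $v$ with a strictly larger label and cannot be a peak. Hence the labelings with peak set $\{v\}$ are precisely those assigning $m$ to $v$, and the remaining labels $1, \ldots, m-1$ can be distributed among the other $m-1$ vertices in any of $(m-1)!$ ways, giving the first branch.

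The argument is elementary once the key observation is fixed; the step I would be most careful with is the converse in the case $|S|=1$, namely verifying that placing the maximum on $v$ simultaneously makes $v$ a peak and forbids every other peak. This is exactly where the completeness of the join is used, since it guarantees $v$ is adjacent to all of $V(G')\setminus\{v\}$. I would also flag the mild degree hypothesis $\deg_{G'}(v)\geq 2$, which holds whenever $n \geq 2$ or $|V(G)| \geq 2$ and rules out the degenerate case where $G'$ is a single edge.
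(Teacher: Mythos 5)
Your proof is correct, and since the paper actually omits the proof of Proposition \ref{prop:complete} --- remarking only that it is analogous to that of Proposition \ref{theorem:nullgraphjoin} --- your argument is a direct filling-in of that omission. Your key observation, that a vertex $v \in V(K_n)$ is adjacent to \emph{every} other vertex of $G' = K_n \vee G$ and hence is a peak if and only if it carries the maximum label $m = |V(G')|$, is exactly why the $K_n$ case is simpler than the $\overline{K_n}$ case: there one must argue that the entire block of top labels $\{m, m-1, \ldots\}$ occupies $S$ and then control where label $m - |S|$ goes, whereas here only the single label $m$ matters, and your converse count $(m-1)!$ follows because placing $m$ on $v$ simultaneously creates the peak at $v$ and kills every other potential peak. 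Two boundary remarks. You rightly flag the degenerate case $n = |V(G)| = 1$, where $G'$ is a single edge, $v$ has degree $1$, and the first branch fails; the statement is silent on this, while the companion Proposition \ref{theorem:nullgraphjoin} imposes $|V(G)| > 1$. More significantly, your case split covers $|S| = 1$ and $|S| \geq 2$ but not $S = \emptyset$, which the ``otherwise'' branch of the statement literally includes; no proof of that subcase is possible, because the claim $|P(\emptyset; G')| = 0$ is false in general --- by the paper's own Table \ref{tab:results}, the star graph $\mathcal{S}_n = K_1 \vee \overline{K_{n-1}}$ satisfies $|P(\emptyset; \mathcal{S}_n)| = (n-1)!\,(n-1) \neq 0$ for $n \geq 3$, and $\emptyset \subseteq V(K_1)$. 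This is a defect of the statement (it should assume $S$ nonempty, as Proposition \ref{theorem:nullgraphjoin} does) rather than of your reasoning, but you should say explicitly that you are reading ``otherwise'' as $|S| \geq 2$.
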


Many graph families can be constructed 
as the join of two graphs including: 
star graphs, cone graphs, fan graphs, complete bipartite,  windmill graphs, 
and wheel graphs.
Propositions \ref{theorem:nullgraphjoin} and 
\ref{prop:complete} can be applied to give closed formulas describing $|P(S;G)|$ 
for any admissible peak set $S \subseteq V(G)$ 
when $G$ is a star graph, ternary star graph, complete bipartite graph, or a Dutch windmill graph, 
and also give some closed formulas 
describing $|P(S;G)|$ when $G$ is a wheel graph, fan graph, and cone graph. We compile these results in
Table \ref{tab:results} below.

\begin{table}
\centering
\begin{adjustbox}{max width=.8\textwidth}
\begin{tabular}{|c|c|c|}
\hline
 {\bf{Graph}}&{\bf{Example}}&{\bf{Results}}\\\hline\hline
\multirow{3}{*}{$\begin{matrix}\text{Star graph:}\\  \mathcal{S}_n=K_1\vee \overline{K}_{n-1} \end{matrix}$}&
\begin{tikzpicture}[scale=0.75,baseline=20]

   \tikzstyle{hollow node}=[draw=white]
   \tikzset{blue node/.style={circle,fill=cyan,draw=black, inner sep=2.5}
}
   \node(18)[blue node] at (4,0){{\tiny{$v_1$}}};
   \node(19)[blue node] at (4.62,0.78){};
   \node(20)[blue node] at (3.78,0.97){};
   \node(21)[blue node] at (3.10,0.43){};
   \node(22)[blue node] at (3.10,-0.43){};
   \node(23)[blue node] at (3.78,-0.97){};
   \node(24)[blue node] at (4.62,-0.78){};
   \node(25)[blue node] at (5,0){};
\node at (4.15,-1.5) {$\mathcal{S}_8$};   
   \draw(18)--(19);
   \draw(18)--(20);
   \draw(18)--(21);
   \draw(18)--(22);
   \draw(18)--(23);
   \draw(18)--(24);
   \draw(18)--(25); 
   
\end{tikzpicture}
&
\multirow{3}{*}{$\left|P(S;\mathcal{S}_n)\right|=\begin{cases}
 (n-1)!(n-1) &\mbox{ if } S=\emptyset\\
 (n-1)! &\mbox{ if } S=\{v_1\}\\0&\mbox{otherwise.}\end{cases}$}\\
 \hline
  \multirow{3}{*}{$\begin{matrix}\text{Ternary Stars graph:}\\  k\mathcal{S}_n=K_k\vee\overline{K}_{n-k} \end{matrix}$}
  &
 \begin{tikzpicture}[scale=0.75,baseline=40pt]
   \tikzstyle{hollow node}=[draw=white]
   \tikzset{blue node/.style={circle,draw=black, fill=cyan, inner sep=2.5}}
\node(a)[blue node] at (-4,0){};
   \node(b)[blue node] at (-3,0){};
   \node(c)[blue node] at (-2,0){};
   \node(d)[blue node] at (-3.5,1){};
   \node(e)[blue node] at (-2.5,1){};
   \node(f)[blue node] at (-3,2){};
   \node at (-3,-.75) {$3\mathcal{S}_6$};   
   \draw(a)--(b);
   \draw(a)--(e);
   \draw(a)--(d);
   \draw(b)--(c);
   \draw(b)--(f);
   \draw(c)--(e);
   \draw(c)--(d);
   \draw(d)--(f);
   \draw(e)--(f);
   \draw(d)--(e);
   \draw(b)--(e);
   \draw(b)--(d);
   \end{tikzpicture}&\multirow{3}{*}{$\left|P(S; k\mathcal{S}_n )\right|=\begin{cases} 
 (N-1)! & \text{ if } S \subset V(K_k) \text{ and } |S|=1 \\
 |S|! \cdot |k| \cdot (N-|S|-1)!  & \text{ if } S \subset V(\ol{K_{N-k}}) \\ 
 0 & \text{ otherwise. }\\
\end{cases}$}
\\
\hline
\multirow{3}{*}{$\begin{matrix}\text{Complete Bipartite:}\\  K_{n,m}=\overline{K}_n \vee \overline{K}_m \end{matrix}$}
  &
  \begin{tikzpicture}[scale=0.75,baseline=50]

   \tikzstyle{hollow node}=[draw=white]
   \tikzset{blue node/.style={circle,fill=cyan,draw=black, inner sep=2.5}
}
   \node(1)[blue node] at (4.5,2.5){};
   \node(2)[blue node] at (4.5,1.5){};
   \node(3)[blue node] at (4.5,.5){};
   \node(4)[blue node] at (4.5,-.5){};
   \node(11)[blue node] at (3,0){};
   \node(12)[blue node] at (3,1){};
   \node(13)[blue node] at (3,2){};
   \node at (3.75,-.75) {$K_{3,4}$};   
   \draw(1)--(11);
   \draw(1)--(12);
   \draw(1)--(13);
   \draw(2)--(11);
   \draw(2)--(12);
   \draw(2)--(13);
   \draw(3)--(11);
   \draw(3)--(12);
   \draw(3)--(13);
   \draw(4)--(11);
   \draw(4)--(12);
   \draw(4)--(13);
\end{tikzpicture}
  &
\multirow{3}{*}{$|P(S;K_{n,m})| = \begin{cases} 
|S|!\cdot m \cdot (m+n-|S|-1)! & \text{ if } S \subset V(\overline{K_n}) \\ 
|S|!\cdot n \cdot (m+n-|S|-1)! & \text{ if } S \subset V(\overline{K_m}) \\                
 0 & \text{otherwise.}
               \end{cases}$}\\
               \hline
\multirow{3}{*}{$\begin{matrix}\text{Dutch Windmill graph:}\\  M^n_k=\left(\cup_{i=1}^nP_2\right)\vee\overline{K}_1 \end{matrix}$}
&
\begin{tikzpicture}[scale=0.75,baseline=20]

   \tikzstyle{hollow node}=[draw=white]
   \tikzset{blue node/.style={circle,fill=cyan,draw=black, inner sep=2.5}
}
   \node(11)[blue node] at (0,0){};
   \node(12)[blue node] at (0.5,0.866){};
   \node(13)[blue node] at (-0.5,0.866){};
   \node(14)[blue node] at (-1,0){};
   \node(15)[blue node] at (-0.5,-0.866){};
   \node(16)[blue node] at (0.5,-0.866){};
   \node(17)[blue node] at (1,0){};
   \node at (0,-1.5) {$M^3_3$};   
   \draw(11)--(12);
   \draw(11)--(13);
   \draw(11)--(14);
   \draw(11)--(15);
   \draw(11)--(16);
   \draw(11)--(17);
   \draw(12)--(13);
   \draw(14)--(15);
   \draw(16)--(17);
   \end{tikzpicture}
&
\multirow{3}{*}{$\left|P(\{v_i\};M_n^k)\right|=
(n-1)!$ where $v_i$ is a any noncentral vertex in $M_n^k$.}\\
               \hline
\multirow{3}{*}{$\begin{matrix}\text{Wheel graph:}\\  W_n=C_n\vee\overline{K}_1 \end{matrix}$}
&
\begin{tikzpicture}[scale=0.75,baseline=18]

   \tikzstyle{hollow node}=[draw=white]
   \tikzset{blue node/.style={circle,fill=cyan,draw=black, inner sep=2.5}
}
   \node(18)[blue node] at (4,0){};
   \node(19)[blue node] at (4.62,0.78){};
   \node(20)[blue node] at (3.78,0.97){};
   \node(21)[blue node] at (3.10,0.43){};
   \node(22)[blue node] at (3.10,-0.43){};
   \node(23)[blue node] at (3.78,-0.97){};
   \node(24)[blue node] at (4.62,-0.78){};
   \node(25)[blue node] at (5,0){};
\node at (4.15,-1.5) {$W_8$};   
   \draw(18)--(19);
   \draw(18)--(20);
   \draw(18)--(21);
   \draw(18)--(22);
   \draw(18)--(23);
   \draw(18)--(24);
   \draw(18)--(25); 
   \draw(19)--(20);
   \draw(20)--(21);
   \draw(21)--(22);
   \draw(22)--(23);
   \draw(23)--(24);
   \draw(24)--(25);
   \draw(25)--(19);
\end{tikzpicture}
&
\multirow{2}{*}{$\left|P(S;W_n)\right|=
n!$ if $S=\{v_1\}$ is the central vertex in $W_n$.}
\\
               \hline
\multirow{3}{*}{$\begin{matrix}\text{Fan graph:}\\  F_{n,m}=P_n \vee \overline{K}_m  \end{matrix}$}

&
\begin{tikzpicture}[scale=0.75,baseline=50]

   \tikzstyle{hollow node}=[draw=white]
   \tikzset{blue node/.style={circle,fill=cyan,draw=black, inner sep=2.5}
}
   \node(1)[blue node] at (4.5,2.5){};
   \node(2)[blue node] at (4.5,1.5){};
   \node(3)[blue node] at (4.5,.5){};
   \node(4)[blue node] at (4.5,-.5){};
   \node(11)[blue node] at (3,0){};
   \node(12)[blue node] at (3,1){};
   \node(13)[blue node] at (3,2){};
   \node at (3.75,-.75) {$F_{3,4}$};   
   \draw(1)--(11);
   \draw(1)--(12);
   \draw(1)--(13);
   \draw(2)--(11);
   \draw(2)--(12);
   \draw(2)--(13);
   \draw(3)--(11);
   \draw(3)--(12);
   \draw(3)--(13);
   \draw(4)--(11);
   \draw(4)--(12);
   \draw(4)--(13);
   \draw(11)--(12);
   \draw(12)--(13);
\end{tikzpicture}
&
\multirow{2}{*}{
$|P(S;F_{n,m})|=|S|!\cdot n\cdot (n+m-|S|-1)!$ if $S\subset V(\overline{K}_m)$.}
\\
               \hline
\multirow{3}{*}{$\begin{matrix}\text{$m$-gonal}\\\text{$n$-cone graph:}\\  C_{m,n}=C_n\vee\ol{K}_m  \end{matrix}$}

&
\begin{tikzpicture}[scale=0.5,baseline=35]

   \tikzstyle{hollow node}=[draw=white]
   \tikzset{blue node/.style={circle,fill=cyan,draw=black, inner sep=2.5}
}
   \node(18)[blue node] at (3,2.5){};
   \node(1)[blue node] at (5,2.5){};
   \node(19)[blue node] at (4.62,0.78){};
   \node(20)[blue node] at (3.78,0.97){};
   \node(21)[blue node] at (3.10,0.43){};
   \node(22)[blue node] at (3.10,-0.43){};
   \node(23)[blue node] at (3.78,-0.97){};
   \node(24)[blue node] at (4.62,-0.78){};
   \node(25)[blue node] at (5,0){};
\node at (4.15,-2) {$C_{7,2}$};   
   \draw(18)--(19);
   \draw(18)--(20);
   \draw(18)--(21);
   \draw(18)--(22);
   \draw(18)--(23);
   \draw(18)--(24);
   \draw(18)--(25); 
   \draw(19)--(20);
   \draw(20)--(21);
   \draw(21)--(22);
   \draw(22)--(23);
   \draw(23)--(24);
   \draw(24)--(25);
   \draw(25)--(19);
   \draw(1)--(19);
   \draw(1)--(20);
   \draw(1)--(21);
   \draw(1)--(22);
   \draw(1)--(23);
   \draw(1)--(24);
   \draw(1)--(25); 
   
\end{tikzpicture}
&
\multirow{2}{*}{
$|P(S;C_{m,n})|=|S|!\cdot n\cdot (n+m-|S|-1)! \text{ if } S\subset V(\overline{K}_m)$.}\\
              \hline
\end{tabular}
\end{adjustbox}
\caption{Corollaries to Propositions \ref{theorem:nullgraphjoin} and \ref{prop:complete}.}\label{tab:results}
\end{table}

\section{Future Considerations}\label{sec:fc}
One point for further investigation is to develop an alternate recursive formula for computing peaks on graphs that is more amenable to computation. The current implementation of the recursion in Algorithm \ref{alg} is prohibitively slow, as the authors observed in practice when working with dense graphs with more than $10$ vertices.  This computational lag was seen in the implementation of the recursive formula in \cite{DHIO16} for peaks on path graphs.  However, the original recursive formula for these graphs developed by Billey, Burdzy and Sagan \cite{BBS13} is much more computationally efficient.  We suggest their paper as a potential source for insight on developing an efficient recursive formula for peaks on general graphs.

A central focus of this paper is investigating how peaks on graphs behave with the join operation.  Many graphs can be constructed in a similar fashion through operations on preexisting graphs.  Examples of such operations are deletion, contraction and Cartesian, Corona, rooted and tensor products.  A natural problem that arises then is to determine how enumerating peaks on graphs after applying certain graph operations is inherited from peaks on the constituent graphs themselves.

\section*{Acknowledgements}
We thank Sara Billey for introducing us to this problem.    The first author thanks the AMS-Simons Travel Grant.
The third author was partially supported by NSF grant DMS--1620202.
The second, fourth, and fifth authors gratefully acknowledges funding support from the Seidler Student/Faculty Undergraduate Scholarly Collaboration Fellowship Program at Florida Gulf Coast University. 
The sixth author thanks the Harvey Mudd College Faculty Research, Scholarship, and Creative Works Award.
We thank SACNAS for providing a supportive atmosphere for minority mathematicians through their annual national conference, which helped us cultivate this collaboration. We thank the National Security Agency (H98230-15-1-0091) and National Science Foundation (DMS-1545136) for SACNAS mini-collaboration grants that supported travel expenses.


\begin{thebibliography}{10}

\bibitem{ABN04}
M.~Aguiar, N.~Bergeron, and K.~Nyman.
\newblock The peak algebra and the descent algebras of types {B} and {D}.
\newblock {\em Trans. Amer. Math. Soc.}, 356(7):2781--2824, 2004.

\bibitem{ANO06}
M.~Aguiar, K.~Nyman, and R.~Orellana.
\newblock New results on the peak algebra.
\newblock {\em J. Algebraic Combin.}, 23(2):149--188, 2006.

\bibitem{BH06}
N.~Bergeron and C.~Hohlweg.
\newblock Colored peak algebras and {H}opf algebras.
\newblock {\em J. Algebraic Combin.}, 24(3):299--330, 2006.

\bibitem{BBPS14}
S.~Billey, K.~Burdzy, S.~Pal, and B.~Sagan.
\newblock On meteors, earthworms and {WIMP}s.
\newblock {\em Ann. Appl. Probab.}, 25(4):1729--1779, 2015.

\bibitem{BBS13}
S.~Billey, K.~Burdzy, and B.~Sagan.
\newblock Permutations with given peak set.
\newblock {\em J. of Integer Seq.}, 16, 2013.

\bibitem{BFT16}
Sara Billey, Matthew Fahrbach, and Alan Talmage.
\newblock Coefficients and {R}oots of {P}eak {P}olynomials.
\newblock {\em Exp. Math.}, 25(2):165--175, 2016.

\bibitem{CV14}
F.~Castro-Velez, A.~Diaz-Lopez, R.~Orellana, J.~Pastrana, and R.~Zevallos.
\newblock Number of permutations with same peak set for signed permutations.
\newblock {\em arXiv:1308.6621}, 2014.

\bibitem{DHIO16}
A.~Diaz-Lopez, P.~Harris, E.~Insko, and M.~Omar.
\newblock A proof of the peak polynomial positivity conjecture.
\newblock {\em arXiv:1605.01708}, 2016.

\bibitem{DHIP15}
A.~Diaz-Lopez, P.~Harris, E.~Insko, and D.~Perez-Lavin.
\newblock Peaks sets of classical coxeter groups.
\newblock {\em arXiv:1505.04479}, 2015.

\bibitem{K14}
A.~Kasraoui.
\newblock The most frequent peak set in a random permutation.
\newblock {\em arXiv:1210.5869}, 2012.

\bibitem{N03}
K.~Nyman.
\newblock The peak algebra of the symmetric group.
\newblock {\em J. Algebraic Combin.}, 17:309--322, 2003.

\bibitem{P07}
T.~K. Petersen.
\newblock Enriched {P}-partitions and peak algebras.
\newblock {\em Adv. Math.}, 209(2):561--610, 2007.

\bibitem{S78}
V.~Strehl.
\newblock Enumeration of alternating permutations according to peak sets.
\newblock {\em J. Combin. Theory Ser. A}, 24:238--240, 1978.

\end{thebibliography}

\bibliographystyle{plain} 

\label{numpages}
\end{document}